\theoremstyle{plain}
\newtheorem{theorem}[equation]{Theorem}
\newtheorem{proposition}[equation]{Proposition}
\newtheorem{lemma}[equation]{Lemma}
\newtheorem{corollary}[equation]{Corollary}
\theoremstyle{remark}
\numberwithin{equation}{section}
\newcommand{\ssubset}{\subset\subset}
\def\Re{\operatorname{Re}}
\def\supp{\operatorname{supp}}
\def\I{\operatorname{I}}
\def\II{\operatorname{II}}
\def\III{\operatorname{III}}
\def\IV{\operatorname{IV}}
\def\V{\operatorname{V}}
\def\Span{\operatorname{span}}
\begin{document}

\bibliographystyle{plain}
\title[Plurisubharmonic defining functions]{A note on plurisubharmonic defining functions in $\mathbb{C}^{n}$}
\author{J. E. Forn\ae ss, A.-K. Herbig}
\subjclass[2000]{32T35, 32U05, 32U10}
\keywords{Plurisubharmonic defining functions, Stein neighborhood basis, DF exponent}
\thanks{Research of the first author was partially supported by an NSF grant.}
\thanks{Research of the second author was supported by FWF grant P19147}
\address{Department of Mathematics, \newline University of Michigan, Ann Arbor, Michigan 48109, USA}
\email{fornaess@umich.edu}
\address{Department of Mathematics, \newline University of Vienna, Vienna,  A-1090, Austria}
\email{anne-katrin.herbig@univie.ac.at}
\date{}
\begin{abstract}
  Let $\Omega\ssubset\mathbb{C}^{n}$, $n\geq 3$, be a smoothly bounded domain. Suppose that 
  $\Omega$ admits 
  a  smooth defining function which is plurisubharmonic on the boundary of $\Omega$. Then the 
  Diederich--Forn\ae ss exponent can be chosen arbitrarily close to $1$, and the closure of $\Omega$ 
  admits a Stein neighborhood basis.
\end{abstract}
\maketitle

\section{Introduction}
   Let $\Omega\ssubset\mathbb{C}^{n}$ be a smoothly bounded domain. Throughout, we  suppose
   that $\Omega$ admits a $\mathcal{C}^{\infty}$-smooth defining function $\rho$ which is 
   plurisubharmonic on the boundary, $b\Omega$, of $\Omega$. That is,
    \begin{align}\label{E:psh}
      H_{\rho}(\xi,\xi)(z):=\sum_{j,k=1}^{n}\frac{\partial^{2}\rho}{\partial z_{j}\partial\overline{z}_{k}}(z)
      \xi_{j}\overline{\xi}_{k}\geq 0\;\;\text{for all}\;z\in b\Omega,\;\xi\in\mathbb{C}^{n}.
    \end{align}
   The question we are concerned with is what condition \eqref{E:psh} tells us about the
   behaviour of the complex Hessian of $\rho$ - or of some other defining function of $\Omega$ - 
   away from the boundary of $\Omega$.
   
   That $\rho$ is not necessarily plurisubharmonic in any neighborhood of $b\Omega$ can be seen
   easily, for an example see Section 2.3 in \cite{For-Her}. In \cite{For-Her}, we showed that if $n=2$,
   then for any $\epsilon>0$, $K>0$ there exist smooth defining functions
   $\rho_{i}$, $i=1,2$, and a neighborhood $U$ of $b\Omega$ such that
   \begin{align}\label{E:dim2result}
     H_{\rho_{i}}(\xi,\xi)(q_{i})\geq-\epsilon|\rho_{i}(q_{i})|\cdot|\xi|^{2}
     +K|\langle\partial\rho_{i}(q_{i}),\xi\rangle|^{2}
   \end{align}
   for all $\xi\in\mathbb{C}^{2}$ and $q_{1}\in\overline{\Omega}\cap U$, 
   $q_{2}\in\Omega^{c}\cap U$.
   
   The estimates \eqref{E:dim2result} imply the existence of particular exhaustion functions  for 
   $\Omega$  and the complement of $\overline{\Omega}$, 
   which is not a direct consequence of \eqref{E:psh}. 
   A Diederich--Forn\ae ss 
   exponent of a domain 
   is a number $\tau\in(0,1]$ for which there exists a smooth defining function $s$ such
   that $-(-s)^{\tau}$ is strictly plurisubharmonic in the domain. 
   It was shown in \cite{DF1,Ran} that all smoothly 
   bounded, pseudoconvex domains have a Diederich--Forn\ae ss exponent. However, it is also known 
   that there are smoothly bounded, pseudoconvex domains for which the largest 
   Diederich--Forn\ae ss exponent has to be chosen arbitrarily close to $0$ (see \cite{DF2}). In 
   \cite{For-Her}, we showed that \eqref{E:dim2result}, $i=1$, implies that the Diederich--Forn\ae ss
   exponent can be chosen arbitrarily close to $1$. We also showed that 
   \eqref{E:dim2result}, $i=2$, yields that the complement of $\Omega$ can be exhausted by bounded, 
   strictly plurisubharmonic functions. In particular, the closure of $\Omega$ admits a Stein
   neighborhood basis.
   
   \medskip
   
   For $n\geq 3$ we obtain the following:
   \begin{theorem}\label{T:Main}
  Let $\Omega\ssubset\mathbb{C}^{n}$ be a smoothly bounded domain. Suppose that 
  $\Omega$ has a smooth defining function which is plurisubharmonic on the boundary of 
  $\Omega$. Then for any $\epsilon>0$ there exist a neighborhood $U$ of $b\Omega$ and smooth  
  defining functions $r_{1}$ and $r_{2}$ such that
  \begin{align}\label{E:Main1}
    H_{r_{1}}(\xi,\xi)(q)\geq -\epsilon
    \left[|r_{1}(q)|\cdot|\xi|^{2}+\frac{1}{|r_{1}(q)|}\cdot\left|\langle\partial r_{1}(q),\xi\rangle\right|^{2}\right]
  \end{align}
  holds for all $q\in\Omega\cap U$, $\xi\in\mathbb{C}^{n}$, and
  \begin{align}\label{E:Main2}
    H_{r_{2}}(\xi,\xi)(q)\geq 
    -\epsilon
    \left[r_{2}(q)\cdot|\xi|^{2}+\frac{1}{r_{2}(q)}\cdot\left|\langle\partial r_{2}(q),\xi\rangle\right|^{2}\right]  
  \end{align}
  holds for all $q\in(\overline{\Omega})^{c}\cap U$, $\xi\in\mathbb{C}^{n}$.
\end{theorem}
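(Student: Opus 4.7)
The plan is to construct $r_{1}$ and $r_{2}$ as explicit multiplicative perturbations of $\rho$, and to verify \eqref{E:Main1}, \eqref{E:Main2} by combining a Taylor expansion from $b\Omega$ with an arithmetic-geometric absorption argument.

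As a starting point, \eqref{E:psh}, the smoothness of $\rho$, and the compactness of $b\Omega$ give, via Taylor expansion in the normal variable, an ambient bound
\[
H_{\rho}(\xi,\xi)(q) \geq -C_{0}\,|\rho(q)|\,|\xi|^{2}
\]
in some fixed neighborhood $V$ of $b\Omega$, with $C_{0}$ depending only on $\rho$. This already yields \eqref{E:Main1} with $r_{1}=\rho$ and $\epsilon=C_{0}$; the content of the theorem is to replace $C_{0}$ by an arbitrary $\epsilon>0$.

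For the inside estimate I would try $r_{1}=\rho\cdot e^{K\rho}$ with $K=K(\epsilon)$ large (more generally $r_{1}=\rho\cdot e^{\phi_{\epsilon}}$), for which direct computation gives
\[
H_{r_{1}}(\xi,\xi) = e^{K\rho}\bigl[(1+K\rho)H_{\rho}(\xi,\xi) + K(2+K\rho)|\langle\partial\rho,\xi\rangle|^{2}\bigr],
\]
producing a positive bonus of order $K|\langle\partial\rho,\xi\rangle|^{2}$ in the complex normal direction on $b\Omega$. Combined with the arithmetic-geometric inequality
\[
2\Re[\langle\partial\rho,\xi\rangle\,\overline{A}] \geq -\tfrac{\epsilon}{|r_{1}|}|\langle\partial r_{1},\xi\rangle|^{2} - \tfrac{|r_{1}|}{\epsilon}|A|^{2},
\]
this lets one trade the off-diagonal contributions of the Hessian into the slack $-\tfrac{\epsilon}{|r_{1}|}|\langle\partial r_{1},\xi\rangle|^{2}$ on the right of \eqref{E:Main1}, at a cost dominable by $-\epsilon|r_{1}||\xi|^{2}$ provided the neighborhood $U_{\epsilon}$ is chosen small.

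The main obstacle, and the key content of the theorem, is handling purely complex-tangential $\xi$ at weakly pseudoconvex points of $b\Omega$: for such $\xi$ the $|r_{1}|^{-1}$-slack of \eqref{E:Main1} vanishes, and the Taylor loss must be absorbed entirely by $-\epsilon|r_{1}||\xi|^{2}$; no multiplicative rescaling of $\rho$ changes the ratio $H_{r_{1}}(\xi_{T},\xi_{T})/|r_{1}|$ in this direction. To sharpen the Taylor estimate I would use a Schur-type consequence of \eqref{E:psh}: at a null direction $\xi_{T}$ of $H_{\rho}(q_{0})$ on $b\Omega$, testing \eqref{E:psh} against $\xi_{T}+t\eta$ forces $H_{\rho}(\xi_{T},\cdot)(q_{0})\equiv 0$, and $q_{0}$ is then a tangential minimum of $w\mapsto H_{\rho}(\xi_{T},\xi_{T})(w)$; together these let the tangential Taylor remainder be controlled by $\epsilon|\rho|$ once $U_{\epsilon}$ is taken small, which the statement permits. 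For the outside estimate \eqref{E:Main2} I would use the mirror formula $r_{2}=\rho\cdot e^{K\rho}$ on $(\overline{\Omega})^{c}\cap U$, where $r_{2}>0$: the Taylor expansion of $H_{\rho}$ in the outward normal direction has exactly the same structure, and the same AM-GM absorption carries through by symmetry.
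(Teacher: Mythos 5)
Your setup is fine up to the point where you isolate purely complex--tangential $\xi$ at weakly pseudoconvex points, but the way you dispose of that case is where the argument breaks. At a boundary point $p$ with a weak tangential direction $W$ (so $\langle\partial\rho(p),W\rangle=0$ and $H_{\rho}(W,W)(p)=0$), the two facts you invoke --- that $H_{\rho}(W,\cdot)(p)=0$ and that $p$ is a tangential minimum of $H_{\rho}(W,W)_{|_{b\Omega}}$ --- only annihilate the \emph{tangential} first derivatives. The normal derivative survives, and Taylor's theorem gives
\begin{align*}
  H_{\rho}(W,W)(q)=-d_{b\Omega}(q)\,(NH_{\rho})(W,W)(p)+\mathcal{O}\left(d_{b\Omega}^{2}(q)\right)|W|^{2}
\end{align*}
for $q$ on the inward normal at $p$. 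When $(NH_{\rho})(W,W)(p)>0$ --- which does happen, since $\rho$ need not be plurisubharmonic on any neighborhood of $b\Omega$ --- this term is of size $|\rho(q)|$ times a \emph{fixed} positive constant, so it is not a remainder that becomes $\leq\epsilon|\rho|$ after shrinking $U_{\epsilon}$: the ratio $H_{\rho}(W,W)(q)/|\rho(q)|$ tends to a negative constant independent of the neighborhood. Your factor $e^{K\rho}$ gives nothing here, because its gradient is parallel to $\partial\rho$, which pairs to zero with $W$ at $p$, and its Hessian contribution carries a factor $\rho$ with no useful sign; and your parenthetical claim that no multiplicative rescaling of $\rho$ can change the tangential ratio is not correct in general --- it is precisely a (cleverly chosen) multiplicative rescaling that saves the day.

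What is actually required, and what constitutes the bulk of the paper, is a conformal factor built from the Levi form itself: locally one takes $r=\rho\,e^{-C\sigma}$ with $\sigma=\sum_{\alpha}H_{\rho}(W^{\alpha},W^{\alpha})$ summed over a frame of weak directions, so that the term $-C\rho H_{\sigma}(W,W)=C|\rho|H_{\sigma}(W,W)\geq 0$ can absorb $-d_{b\Omega}(q)(NH_{\rho})(W,W)(p)$. Making this quantitative requires the comparison $\left|(NH_{\rho})(W,W)\right|^{2}\leq K|W|^{2}H_{\sigma}(W,W)$ on the stratum of constant Levi rank (Proposition \ref{P:compare}, itself resting on Lemma \ref{L:compare}), a cutoff of $\sigma$ that keeps $|\langle\partial\sigma,T\rangle|^{2}\lesssim\sigma|T|^{2}$ (Lemma \ref{L:McNeal}) so that the correction does not create new obstructions at boundary points of higher Levi rank, and finally an induction over the rank strata $\Sigma_{0},\dots,\Sigma_{n-1}$ to patch the local corrections into global defining functions, with $r_{2}=\rho e^{+\vartheta}$ obtained from the same correction with the opposite sign. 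None of this machinery appears in your proposal; without it the tangential case --- which you correctly identify as the heart of the theorem --- is not handled, so the proof has a genuine gap.
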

Let us remark that our proof of Theorem \ref{T:Main} also works when $n=2$. However,
the results of Theorem \ref{T:Main} are weaker than \eqref{E:dim2result}. Nevertheless, 
they are still strong enough to obtain that the Diederich--Forn\ae ss exponent can be chosen arbitrarily close to 1 and that the closure of the domain admits a Stein neighborhood basis. In particular, we have the following: 

\begin{corollary}\label{C:DF}
  Assume the hypotheses of Theorem \ref{T:Main} hold. Then
  \begin{enumerate}
    \item for all $\eta\in (0,1)$ there exists a smooth defining function $\tilde{r}_{1}$ of $\Omega$ such that
      $-(-\tilde{r}_{1})^{\eta}$ is strictly plurisubharmonic on $\Omega$,
    \item for all $\eta>1$ there exist a smooth defining function $\tilde{r}_{2}$ of $\Omega$ and a  
      neighborhood $U$ of $\overline{\Omega}$ such that $\tilde{r}_{2}^{\eta}$ is strictly plurisubharmonic
      on $(\overline{\Omega})^{c}\cap U$.
  \end{enumerate}
\end{corollary}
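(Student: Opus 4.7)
The plan is to reduce both parts of the corollary to a direct Hessian computation applied to a multiplicative modification of the defining functions provided by Theorem \ref{T:Main}. For part (1), with $\eta\in(0,1)$ given, I would set $\tilde{r}_{1}:=r_{1}e^{-K|z|^{2}}$, where $r_{1}$ is the defining function from Theorem \ref{T:Main} applied with some small $\epsilon>0$, and choose the constants $\epsilon,K>0$ at the end. Since $\eta\in(0,1)$, a chain rule shows that strict plurisubharmonicity of $-(-\tilde{r}_{1})^{\eta}$ on $\Omega$ is equivalent to pointwise positivity of $H_{\tilde{r}_{1}}(\xi,\xi)+\frac{1-\eta}{|\tilde{r}_{1}|}\bigl|\langle\partial\tilde{r}_{1},\xi\rangle\bigr|^{2}$. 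Expanding the modification via the product rule and cancelling, after dividing by $e^{-K|z|^{2}}$ this equals
\[H_{r_{1}}(\xi,\xi) + \frac{1-\eta}{|r_{1}|}|\alpha|^{2} + K|r_{1}|\,|\xi|^{2} - 2K\eta\,\Re(\alpha\bar{\beta}) - K^{2}\eta|r_{1}|\,|\beta|^{2},\]
where $\alpha:=\langle\partial r_{1},\xi\rangle$ and $\beta:=\sum_{j}\bar{z}_{j}\xi_{j}$.

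On $U\cap\Omega$ I would substitute \eqref{E:Main1} and absorb the cross term $2K\eta|\alpha||\beta|$ by AM--GM, part into the positive $(1-\eta-\epsilon)|\alpha|^{2}/|r_{1}|$ and part into an additional $K^{2}|r_{1}||\beta|^{2}$ contribution. Using $|\beta|^{2}\leq R^{2}|\xi|^{2}$ with $R:=\sup_{\overline{\Omega}}|z|$, this produces a lower bound of the form $[K-\epsilon-\eta K^{2}K_{0}]|r_{1}||\xi|^{2}+c|\alpha|^{2}/|r_{1}|$ with $c>0$ and $K_{0}=R^{2}(1-\epsilon+\eta)/(1-\eta-\epsilon)$, which is bounded as $\epsilon\to 0$. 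The bracket is a concave quadratic in $K$ with maximum value $1/(4\eta K_{0})-\epsilon$, positive at $K:=1/(2\eta K_{0})$ once $\epsilon$ is small enough.

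The interior $\Omega\setminus U$, where \eqref{E:Main1} is unavailable, is the delicate step. Only the trivial bound $H_{r_{1}}\geq -M|\xi|^{2}$ on $\overline{\Omega}$ is available, together with $|r_{1}|\geq\delta_{0}>0$, so the positive contribution $K|r_{1}|\,|\xi|^{2}\geq K\delta_{0}|\xi|^{2}$ has to overpower both $-M|\xi|^{2}$ and the remaining quadratic-in-$K$ corrections. Reconciling this with the $K$ forced by the boundary-side maximization is the main technical point: since $K_{0}$ and hence the optimal $K$ stay bounded as $\epsilon\to 0$, one first fixes $K$ and then, by shrinking $\epsilon$ if necessary, arranges that the theorem's neighborhood $U$ contains a collar $\{|r_{1}|<\delta_{0}\}$ with $\delta_{0}$ above the interior threshold.

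Part (2) runs in parallel. With $\tilde{r}_{2}:=r_{2}e^{K|z|^{2}}$ and $\eta>1$, strict plurisubharmonicity of $\tilde{r}_{2}^{\eta}$ is equivalent to pointwise positivity of $H_{\tilde{r}_{2}}+\frac{\eta-1}{\tilde{r}_{2}}|\langle\partial\tilde{r}_{2},\xi\rangle|^{2}$; the analogous expansion and absorption argument based on \eqref{E:Main2} will handle the estimate on $(\overline{\Omega})^{c}\cap U$. Since the corollary only requires strict plurisubharmonicity on a one-sided neighborhood $U'$ of $\overline{\Omega}$, I simply take $U'\subset U$ and no interior issue arises.
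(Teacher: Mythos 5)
Your boundary-collar computation is essentially the paper's own: the paper likewise works with $h=-\bigl(-r_{1}e^{-\delta|z|^{2}}\bigr)^{\eta}$, expands the complex Hessian, absorbs the cross term with the (sc)-(lc) inequality, fixes $\delta=\frac{1-\eta}{2\eta(1+\eta)D}$ with $D=\max_{\overline{\Omega}}|z|^{2}$, chooses $\epsilon$ comparable to $1-\eta$, and then feeds in \eqref{E:Main1} to get $H_{h}(\xi,\xi)\geq\eta(-h)\epsilon|\xi|^{2}$ on $\Omega\cap U$. Your reduction of strict plurisubharmonicity of $-(-\tilde r_{1})^{\eta}$ to positivity of $H_{\tilde r_{1}}(\xi,\xi)+\frac{1-\eta}{|\tilde r_{1}|}|\langle\partial\tilde r_{1},\xi\rangle|^{2}$ is correct, and your treatment of part (2), where only a one-sided neighborhood of $b\Omega$ is needed, matches the paper.

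The genuine gap is your handling of the interior $\Omega\setminus U$ in part (1). You propose to make the single formula work globally by weighing the term $K|r_{1}|\,|\xi|^{2}\geq K\delta_{0}|\xi|^{2}$ against a crude bound $H_{r_{1}}\geq-M|\xi|^{2}$, and to ``first fix $K$ and then shrink $\epsilon$'' so that the theorem's $U$ contains the collar $\{|r_{1}|<\delta_{0}\}$ with $\delta_{0}$ above the interior threshold. This step fails: first, $K$ is capped above by your own boundary absorption (roughly $K\lesssim\frac{1-\eta}{\eta(1+\eta)R^{2}}$), so the interior gain $K\delta_{0}$ cannot be made large; second, both $r_{1}$ and $U$ are outputs of Theorem \ref{T:Main} and depend on $\epsilon$ in an uncontrolled way --- shrinking $\epsilon$ does not enlarge $U$ (the estimates then hold, if anything, on a thinner collar), and the interior bound $M$ itself depends on $\epsilon$, since $r_{1}=\rho e^{-\vartheta}$ with $\vartheta$ built from constants such as $C$ in \eqref{E:chooseC} and the cut-off parameters $m,\tau$, none of which are uniform in $\epsilon$. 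There is therefore no mechanism forcing $K\delta_{0}>M$, and the reconciliation you describe cannot be carried out. The paper avoids the issue entirely: it proves strict plurisubharmonicity of $h$ only on $\Omega\cap U$ and then invokes the standard Diederich--Forn\ae ss interior modification (p.~133 of \cite{DF1}) to produce a defining function $\tilde r_{1}$ with $-(-\tilde r_{1})^{\eta}$ strictly plurisubharmonic on all of $\Omega$. You should either cite that patching argument or supply one (e.g.\ a regularized maximum with a suitable strictly plurisubharmonic function on the compact part of $\Omega$); the global-formula route as you sketched it cannot be completed.
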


We note that in \cite{DF3} it was proved that (i) and (ii) of Corollary \ref{C:DF}  hold for 
so-called regular 
domains. Furthermore, in \cite{DF4} it was shown that pseudoconvex domains with real-analytic boundary are regular domains.

This article is structured as follows. In Section \ref{S:prelim}, we give the setting and define our basic notions. Furthermore, we show in this section which piece of the complex Hessian of $\rho$ at a given point $p$ in $b\Omega$ constitutes an obstruction for inequality \eqref{E:Main1} to hold for a given $\epsilon>0$. In Section \ref{S:modification}, we construct a local defining function which does not possess this obstruction term  to \eqref{E:Main1} at a given boundary point $p$. Since this fixes our problem with \eqref{E:Main1} only at this point $p$ (and at nearby boundary points at which the Levi form is of the same rank as at $p$), we will need to patch the newly constructed local defining functions without letting the obstruction term arise again. This is done
in Section \ref{S:cutoff}. In Section \ref{S:proof}, we finally prove \eqref{E:Main1} and remark at the end how to obtain \eqref{E:Main2}. We conclude this paper with the proof of Corollary
\ref{C:DF} in Section \ref{S:DF}.

We would like to thank J. D. McNeal for fruitful discussions on this project, in particular we are very grateful to him for providing us with Lemma \ref{L:McNeal} and its proof.

\medskip

\section{Preliminaries and pointwise  obstruction}\label{S:prelim}

Let $(z_{1},\dots,z_{n})$ denote the coordinates of $\mathbb{C}^{n}$. We shall identify the vector
$\langle\xi_{1},\dots,\xi_{n}\rangle$ in $\mathbb{C}^{n}$ with 
$\sum_{i=1}^{n}\xi_{i}\frac{\partial}{\partial z_{i}}$ in the $(1,0)$-tangent bundle of $\mathbb{C}^{n}$ at
any given point. This means in particular that if $X$, $Y$ are $(1,0)$-vector fields with
$X(z)=\sum_{i=1}^{n}X_{i}(z)\frac{\partial}{\partial z_{i}}$ and 
$Y(z)=\sum_{i=1}^{n}Y_{i}(z)\frac{\partial}{\partial z_{i}}$, then
\begin{align*}
  H_{\rho}(X,Y)(z)=\sum_{j,k=1}^{n}\frac{\partial^{2}\rho}{\partial z_{j}\partial\overline{z}_{k}}(z)
  X_{j}(z)\overline{Y}_{k}(z).
\end{align*}

Suppose $Z$ is another $(1,0)$-vector field with 
$Z(z)=\sum_{l=1}^{n}Z_{l}(z)\frac{\partial}{\partial z_{l}}$. For notational convenience, and because of lack of a better notation, we shall write
\begin{align*}
 (ZH_{\rho})(X,Y)(z):=
  \sum_{j,k,l=1}^{n}\frac{\partial^{3}\rho}{\partial z_{j}\partial\overline{z}_{k}\partial z_{l}}(z)
  X_{j}(z)\overline{Y}_{k}(z) Z_{l}(z).
\end{align*}

We use the pointwise hermitian inner product $\langle .,.\rangle$ defined by
$\langle\frac{\partial}{\partial z_{j}},\frac{\partial}{\partial z_{k}}\rangle=\delta_{j}^{k}$. Hoping that it will not cause any confusion, we also write $\langle .,.\rangle$ for contractions of vector fields and forms.

We will employ the so-called (sc)-(lc) inequality: $|ab|\leq\tau|a|^{2}+\frac{1}{4\tau}|b|^{2}$ for 
$\tau>0$. Furthermore, we shall write $|A|\lesssim|B|$ to mean $|A|\leq c|B|$ for some constant $c>0$ which does not depend on any of the relevant parameters. In particular, we will only use this notation when $c$ depends solely on absolute constants, e.g., dimension, quantities related to the given defining function 
$\rho$.

\medskip

 Let us now work on proving inequality \eqref{E:Main1}.
 Since $b\Omega$ is smooth, there exists a neighborhood $U$ of $b\Omega$ and a smooth map
  \begin{align*}
    \pi:\overline{\Omega}\cap U&\longrightarrow b\Omega\\
    q&\longmapsto\pi(q)=p
  \end{align*}  
     such that $\pi(q)=p$ lies on the line normal to $b\Omega$ passing through $q$ and $|p-q|$ equals 
     the  Euclidean distance, $d_{b\Omega}(q)$, of $q$ to $b\Omega$.
     
After possibly shrinking $U$, we can assume that $\partial\rho\neq 0$ on $U$. We set 
$N(z)=\frac{1}{|\partial\rho(z)|}\sum_{j=1}^{n}\frac{\partial\rho}{\partial\overline{z}_{j}}(z)
 \frac{\partial}{\partial z_{j}}$. If $f$ is a smooth function on $U$, then it follows from Taylor's theorem 
 that 
 \begin{align}\label{E:BasicTaylor}
     f(q)=f(p)-d_{b\Omega}(q)\left(\Re N\right)(f)(p)+\mathcal{O}\left(d_{b\Omega}^{2}(q)\right)\;\;
     \text{for}\;\;q\in \overline{\Omega}\cap U;
 \end{align}
 for details see for instance Section 2.1 in \cite{For-Her}\footnote{Equation \eqref{E:BasicTaylor} above differs from (2.1) in \cite{For-Her} by a factor of $2$ in the second term on the right hand side.  This stems from mistakenly using that outward normal of length $1/2$ instead of the one of unit length in 
 \cite{For-Her}. However, this mistake is inconsequential for the results in \cite{For-Her}.}.
 Let $p\in b\Omega\cap U$ be given. Let $W\in\mathbb{C}^{n}$ be a weak, complex tangential vector at 
 $p$, i.e., $\langle\partial\rho(p),W\rangle=0$ and $H_{\rho}(W,W)(p)=0$. If $q\in\Omega\cap U$ with 
 $\pi(q)=p$, then \eqref{E:BasicTaylor} implies
  \begin{align}\label{E:BasicTaylorH}
   H_{\rho}(W,W)(q)=H_{\rho}(W,W)(p)-d_{b\Omega}(q)\left(\Re N\right)
   \left(H_{\rho}(W,W)\right)(p)+\mathcal{O}(d^{2}_{b\Omega}(q))|W|^{2}.
 \end{align}
 Since $H_{\rho}(W,W)$ is a real-valued function, we have
 \begin{align*}
   \left(\Re N\right)\left(H_{\rho}(W,W)\right)=\Re\left[N\left(H_{\rho}(W,W)\right)\right].
 \end{align*}
 Moreover,
 $H_{\rho}(W,W)$ is non-negative on $b\Omega\cap U$ and equals $0$ at $p$. That is,
 $H_{\rho}(W,W)_{|_{b\Omega\cap U}}$ attains a local minimum at $p$. Therefore, any tangential 
 derivative of $H_{\rho}(W,W)$ vanishes at $p$. Since $N-\overline{N}$ is tangential to $b\Omega$, 
 we obtain 
 \begin{align*}
   \Re\left[N\left(H_{\rho}(W,W)\right)\right](p)=N\left(H_{\rho}(W,W)\right)(p)
   =(NH_{\rho})(W,W)(p),
 \end{align*}  
 where the last equality holds since $W$ is a fixed vector.
 Hence, \eqref{E:BasicTaylorH} becomes
 \begin{align}\label{E:BasicTaylorHW}
   H_{\rho}(W,W)(q)=-d_{b\Omega}(q)(NH_{\rho})(W,W)(p)
   +\mathcal{O}\left(d_{b\Omega}^{2}(q)\right)|W|^{2}.
 \end{align}
 Clearly,  we have a problem with obtaining \eqref{E:Main1} 
 when $(NH_{\rho})(W,W)$ is strictly positive at $p$.  That is, when $H_{\rho}(W,W)$ 
 is strictly decreasing  along the real inward normal to $b\Omega$ at $p$, i.e.,
 $H_{\rho}(W,W)$ becomes negative there,  then \eqref{E:Main1} can not hold for  the complex
 Hessian $\rho$ when $
 \epsilon>0$ is sufficiently close to zero. The question is whether we can find another smooth defining function $r$ of 
 $\Omega$ such that $(NH_{r})(W,W)(p)$ is less than $(NH_{\rho})(W,W)(p)$. The construction of
 such a function $r$ is relatively easy and straightforward when $n=2$ (see Section 2.3 in \cite{For-Her} 
 for a non-technical derivation of $r$). The difficulty in higher dimensions arises simply from the fact that  
 the Levi form of a defining function might vanish in more than one 
 complex tangential direction at a given boundary point.
 
 \medskip
 
 \section{Pointwise Modification of $\rho$}\label{S:modification}

 Let $\Sigma_{i}\subset b\Omega$ be the set of boundary points at which the Levi form of $\rho$
 has rank $i$, $i\in\{0,\dots,n-1\}$. Note that $\cup_{i=0}^{j}\Sigma_{i}$ is closed in $b\Omega$ for any 
 $j\in\{0,\dots,n-1\}$. Moreover, 
 $\Sigma_{j}$ is relatively closed in $b\Omega\setminus \cup_{i=0} ^{j-1}\Sigma_{i}$  for 
 $j\in\{1,\dots,n-1\}$. Of course, $\Sigma_{n-1}$ is the set of strictly pseudoconvex
  boundary points of $\Omega$.

 Let $p\in b\Omega\cap\Sigma_{i}$ for some $i\in\{0,\dots, n-2\}$ be given. Then there exist a 
 neighborhood $V\subset U$ of $p$ and smooth, linearly independent $(1,0)$-vector fields $W^{\alpha}$, 
 $1\leq\alpha\leq n-1-i$, on $V$, which are complex tangential to $b\Omega$ on $b\Omega\cap V$ and 
 satisfy $H_{\rho}(W^{\alpha},W^{\alpha})=0$ on $\Sigma_{i}\cap V$.
 We consider those points $q\in\Omega\cap V$ with $\pi(q)=p$.  
 
 \medskip
 
 We shall work with the smooth  function
 \begin{align*}
    r(z)=\rho(z)\cdot e^{-C\sigma(z)},\;\text{where}\; \;
    \sigma(z)=\sum_{\alpha=1}^{n-1-i}H_{\rho}(W^{\alpha},W^{\alpha})(z)
 \end{align*}
 for $z\in V$. Here, the constant $C>0$ is fixed and to be chosen later.
 Note that $r$ defines $b\Omega$ on $b\Omega\cap V$.
 Furthermore, $\sigma$ is a smooth function on $V$ which is non-negative on $b\Omega\cap V$ and 
 vanishes on the set $\Sigma_{i}\cap 
 V$. That means that $\sigma_{|_{b\Omega\cap V}}$ attains a local minimum at each point in $\Sigma_{i}
 \cap V$. Therefore, any tangential derivative of $\sigma$ vanishes on $\Sigma_{i}\cap V$. 
 Moreover, if $z\in\Sigma_{i}\cap V$ and  $T\in\mathbb{C}T_{z}b\Omega$ is such that $H_{\rho}(T,T)$ 
 vanishes at $z$, then $H_{\sigma}(T,T)$ is non-negative at that point. 
   
  \medskip 
   
 Let $W\in 
 \mathbb{C}^{n}$
 be a  vector contained in the span of the vectors 
 $\left\{W^{\alpha}(p)\right\}_{\alpha=1}^{n-1-i}$. Then, using \eqref{E:BasicTaylorHW}, it follows that
 \begin{align}\label{E:explaincutoff}
   H_{r}(W,W)(q)
   =&e^{-C\sigma(q)}\left[H_{\rho}(W,W)
   -C\Re\left(\langle\partial\rho,W\rangle\overline{\langle\partial\sigma,W\rangle
   }\right)\right.\notag\\
   &\hspace{3.5cm}+\left.\rho\left(C^{2}\left|\langle\partial\sigma,W\rangle\right|^{2} 
   -CH_{\sigma}(W,W)\right)\right](q)\notag\\
   =&e^{-C\sigma(q)}
   \Bigl[
   -d_{b\Omega}(q)(NH_{\rho})(W,W)(p)-C\rho(q)H_{\sigma}(W,W)(q)\bigr. 
   +\mathcal{O}\left(d_{b\Omega}^{2}(q)\right)|W|^{2}\notag\\
   &\hspace{1.5cm}+C^{2}\rho(q)\left|\langle\partial\sigma(q),W\rangle \right|^{2}\bigl.-2C\Re\left(
   \langle\partial\rho,W\rangle\overline{\langle\partial\sigma,W\rangle
   }
   \right)(q)
   \Bigr].
 \end{align}
 Since $\langle\partial\sigma(p),W\rangle=0=\langle\partial\rho(p),W\rangle$, Taylor's theorem gives
 \begin{align*}
   \langle\partial\sigma(q),W\rangle=\mathcal{O}\left(r(q)\right)|W|=\langle\partial\rho(q),W\rangle.
 \end{align*}
 Therefore, we obtain
 \begin{align}\label{E:BasicTaylorr}
   H_{r}(W,W)(q)\geq -d_{b\Omega}(q)e^{-C\sigma(q)}(NH_{\rho})(W,W)(p)&-Cr(q)H_{\sigma}(W,W)(q)\\
   &+\mathcal{O}\left(r^{2}(q)\right)|W|^{2},\notag
\end{align}
 where the constant in the last term depends on the choice of the constant $C$. However, in view of our claim \eqref{E:Main1}, this  is inconsequential. From here on, we will not point out such negligible dependencies. 
 
 We already know that $H_{\sigma}(W,W)(p)$ is non-negative, i.e., of the right sign to correct 
 $(NH_{\rho})(W,W)(p)$ when necessary. The question is whether the sizes of 
 $(NH_{\rho})(W,W)(p)$ and 
 $H_{\sigma}(W,W)(p)$ are comparable in some sense. The following proposition clarifies this.
 \begin{proposition}\label{P:compare}
   There exists a constant $K>0$ such that
   \begin{align*}
     \left|(NH_{\rho})(W,W)(z_{0})\right|^{2}
     \leq K|W|^{2}\cdot H_{\sigma}(W,W)(z_{0})
   \end{align*}
   holds for all $z_{0}\in\Sigma_{i}\cap V$ and $W\in\mathbb{C}T_{z_{0}}b\Omega$
   with $H_{\rho}(W,W)(z_{0})=0$.
 \end{proposition}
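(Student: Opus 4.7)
The plan is to compare the third-order datum $(NH_\rho)(W,W)(z_0)$ with the second-order datum $H_\sigma(W,W)(z_0)$ via an auxiliary complex tangential function. Since $W\in\mathbb{C}T_{z_0}b\Omega$ satisfies $H_\rho(W,W)(z_0)=0$, $W$ lies in the span of the Levi kernel $\{W^{\alpha}(z_0)\}_{\alpha=1}^{n-1-i}$ at $z_0$; write $W=\sum_\alpha c_\alpha W^\alpha(z_0)$, and introduce the extended $(1,0)$-vector field $\tilde W:=\sum_\alpha c_\alpha W^\alpha$ on $V$ together with $\psi(z):=H_\rho(\tilde W,\tilde W)(z)$. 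Since $\tilde W$ is complex tangential on $b\Omega\cap V$, the plurisubharmonicity of $\rho$ on $b\Omega$ forces $\psi\ge 0$ on $b\Omega\cap V$, with $\psi(z_0)=0$.

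The proof then proceeds in three steps. First, I would establish
$$N\psi(z_0)=(NH_\rho)(W,W)(z_0).$$
Expanding $N\psi(z_0)$ via the Leibniz rule produces $(NH_\rho)(W,W)(z_0)$ together with correction terms of the form $H_\rho(\nabla_N\tilde W,W)(z_0)$ and its conjugate. Crucially, because $H_\rho(z_0)$ is positive semidefinite Hermitian on all of $\mathbb{C}^n$ (by the hypothesis that $\rho$ is plurisubharmonic on $b\Omega$) and $H_\rho(W,W)(z_0)=0$, Cauchy--Schwarz for PSD Hermitian forms yields $H_\rho(W,X)(z_0)=0$ for every $X\in\mathbb{C}^n$, so all correction terms vanish identically. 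Second, I would apply the elementary bound $c^*Mc\le(\mathrm{tr}\,M)|c|^2$ to the positive semidefinite Hermitian matrix $M_{\alpha\beta}(z):=H_\rho(W^\alpha,W^\beta)(z)$ on $b\Omega\cap V$; since $\psi=c^*Mc$ and $\sigma=\mathrm{tr}\,M$ there, this gives the boundary domination
$$\psi(z)\le |c|^2\sigma(z)\lesssim |W|^2\sigma(z)\quad\text{on } b\Omega\cap V,$$
where the last step uses uniform invertibility on $V$ of the Gram matrix of the $W^\alpha$'s. Third, I would invoke Lemma~\ref{L:McNeal} to convert this boundary comparison (both functions vanishing at $z_0$) into the transverse estimate
$$|N\psi(z_0)|^2\lesssim |W|^2\,H_\sigma(W,W)(z_0),$$
and the proposition follows by combining this with the first step.

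The hardest step is the third. The domination $\psi\lesssim |W|^2\sigma$ holds only on $b\Omega$, and so does not on its own control the normal derivative $N\psi(z_0)$. The additional structural input exploited by McNeal's Lemma is the Levi-kernel condition $H_\rho(W,W)(z_0)=0$ itself: through the standard formula expressing $H_\sigma(W,W)(z_0)$ in terms of the intrinsic tangential Hessian of $\sigma|_{b\Omega}$ plus a Levi correction proportional to $H_\rho(W,W)(z_0)$, this vanishing ensures that $H_\sigma(W,W)(z_0)$ faithfully captures the intrinsic second-order behaviour of $\sigma|_{b\Omega}$ at $z_0$ in the direction $W$. McNeal's Lemma is designed precisely to extract, from such a boundary inequality between two non-negative smooth functions vanishing at a common point, the quantitative control of the transverse derivative of the smaller one by the tangential complex Hessian of the larger.
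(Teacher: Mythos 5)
Your Steps 1 and 2 are correct (the correction terms in $N\psi(z_0)$ do vanish by Cauchy--Schwarz for the semidefinite form $H_{\rho}(z_0)$, and $\psi\le|c|^2\sigma\lesssim|W|^2\sigma$ on $b\Omega\cap V$ is a valid trace bound), but Step 3 is a genuine gap, and it is where the entire content of the proposition lies. Lemma \ref{L:McNeal} states only that a single nonnegative $C^2$ function $f$ satisfies $|\nabla f|^2\le cf$ on compact sets; in the paper it is used exclusively in Section \ref{S:cutoff} to get $|\langle\partial\sigma,T\rangle|^2\lesssim\sigma|T|^2$ for \emph{tangential} $T$. It says nothing about two functions, nothing about Hessians, and, applied to $\psi|_{b\Omega}$ or $\sigma|_{b\Omega}$ in a boundary chart, it controls only tangential derivatives -- moreover its right-hand side at $z_0$ would be $\sigma(z_0)=0$, so it cannot yield $|N\psi(z_0)|^2\lesssim|W|^2H_{\sigma}(W,W)(z_0)$. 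Worse, no lemma could derive that estimate from the facts you feed into it ($\psi\ge0$ on $b\Omega$, $\psi(z_0)=0$, $\psi\le C|W|^2\sigma$ on $b\Omega\cap V$): replacing $\psi$ by $\psi+M\rho$ leaves all three hypotheses unchanged for every $M>0$, while $N\psi(z_0)$ shifts by $M|\partial\rho(z_0)|$, which is arbitrarily large. Controlling a transverse derivative by boundary data is exactly the obstruction discussed in Section \ref{S:prelim}; it is not available for free.

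What actually makes the proposition work in the paper is a mechanism your proposal never touches: the normal derivative is traded for a tangential one through the symmetry of the third derivatives, $(NH_{\rho})(W,W^{\alpha})(z_0)=(WH_{\rho})(N,W^{\alpha})(z_0)$, so the quantity to be estimated becomes a $W$-derivative of the \emph{mixed} Levi entries $H_{\rho}(N,W^{\alpha})$. These mixed entries are dominated on the boundary via Cauchy--Schwarz for the semidefinite Levi form: $h:=\sigma H_{\rho}(N,N)-\sum_{\alpha}|H_{\rho}(N,W^{\alpha})|^{2}\ge0$ on $b\Omega\cap V$ with $h(z_0)=0$, so $H_{h}(W,W)(z_0)\ge0$ in the weak direction $W$, which gives $H_{\sigma}(W,W)H_{\rho}(N,N)(z_0)\gtrsim\sum_{\alpha}|(NH_{\rho})(W,W^{\alpha})+H_{\rho}(N,X^{\alpha})|^{2}(z_0)$; the vector-field-derivative errors $H_{\rho}(N,X^{\alpha})$ are then absorbed using part (3) of Lemma \ref{L:compare}, and linear independence of the $W^{\alpha}$ converts $\sum_{\alpha}|(NH_{\rho})(W,W^{\alpha})|^{2}$ into $|(NH_{\rho})(W,W)|^{2}/|W|^{2}$. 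Your argument replaces only the easy bookkeeping (your Steps 1--2 correspond to facts the paper also uses implicitly) and delegates the essential transverse-versus-tangential step to a lemma that does not perform it; as written, the proof does not reach the conclusion.
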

 In order to prove Proposition \ref{P:compare}, we need the following lemma:
 \begin{lemma}\label{L:compare}
   Let $z_{0}\in b\Omega$ and $U$ a neighborhood of $z_{0}$. Let $Z$ be a smooth $(1,0)$-vector field  
   defined 
   on $U$, which is complex tangential to $b\Omega$ on $b\Omega\cap U$, and let 
   $Y\in\mathbb{C}^{n}$ be a vector belonging to $
   \mathbb{C}T_{z_{0}}b\Omega$. Suppose that $Y$ and $Z$ are such that
   \begin{align*}
     H_{\rho}(Y,Y)(z_{0})=0=H_{\rho}(Z,Z)(z_{0}).
   \end{align*}
   Set $X=\sum_{j=1}^{n}\overline{Y}(Z_{j})\frac{\partial}{\partial z_{j}}$. Then the following holds:
   \begin{enumerate}
     \item $X$ is complex tangential to $b\Omega$ at $z_{0}$,
     \item $\left(YH_{\rho}\right)(X,Z)(z_{0})=0$,\
     \item $H_{H_{\rho}(Z,Z)}(Y,Y)(z_{0})\geq H_{\rho}(X,X)(z_{0})$.
   \end{enumerate}
 \end{lemma}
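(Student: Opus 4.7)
The plan is to prove the three conclusions of the lemma in sequence.

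For (1), I would use the identity $\langle\partial\rho, Z\rangle \equiv 0$ on $b\Omega\cap U$, which holds because $Z$ is complex tangential. Since $Y \in T^{1,0}_{z_0}b\Omega$, the conjugate $\overline{Y}$ is a complex tangent vector at $z_0$, so $\overline{Y}$-derivatives of functions vanishing on $b\Omega$ vanish at $z_0$. Applying Leibniz to $\overline{Y}\bigl(\langle\partial\rho, Z\rangle\bigr)(z_0)=0$ yields
\[
H_\rho(Z, Y)(z_0) + \langle\partial\rho(z_0), X\rangle = 0.
\]
The Levi form is positive semidefinite on the complex tangent space (since $\rho$ is plurisubharmonic on $b\Omega$); as $Y$ and $Z$ both lie in its null cone at $z_0$, Cauchy--Schwarz forces $H_\rho(Z, Y)(z_0) = 0$, giving $\langle\partial\rho(z_0), X\rangle = 0$.

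For (2), I would extend $Y$ to a $(1,0)$-vector field $\hat{Y}$ near $z_0$ that is complex tangential on $b\Omega$; such an extension exists because $Y \in T^{1,0}_{z_0}b\Omega$. The calculation of (1) then upgrades to an identity valid on $b\Omega\cap U$:
\[
H_\rho(Z, \hat{Y}) + \langle\partial\rho, \overline{\hat{Y}}(Z)\rangle \equiv 0.
\]
Applying a further tangential derivative and evaluating at $z_0$ produces a third-order identity in which $(YH_\rho)(X, Z)(z_0)$ appears alongside auxiliary terms involving the first jet of $\hat{Y}$ and mixed second-order quantities. These auxiliary terms are eliminated by invoking the companion vanishing $\overline{Y}[H_\rho(Z,Z)](z_0) = 0$ (valid because $H_\rho(Z, Z)|_{b\Omega}$ has a local minimum at $z_0$) together with the Cauchy--Schwarz vanishings $H_\rho(X, Z)(z_0) = H_\rho(X, Y)(z_0) = 0$ inherited from (1). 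What remains is $(YH_\rho)(X, Z)(z_0) = 0$.

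For (3), I would directly expand
\[
H_{H_\rho(Z, Z)}(Y, Y)(z_0) = Y\overline{Y}\bigl[H_\rho(Z, Z)\bigr](z_0)
\]
by iterated Leibniz applied to $\sum_{j,k}\rho_{z_j\bar z_k}(z)\,Z_j(z)\,\overline{Z_k(z)}$. The term in which $\overline{Y}$ hits $Z_j$ while $Y$ hits $\overline{Z_k}$ contributes precisely $\sum_{j,k}\rho_{z_j\bar z_k}(z_0)\,X_j\,\overline{X_k} = H_\rho(X, X)(z_0)$. The remaining cross and higher-order terms are processed using (2) together with the first-order vanishings $Y[H_\rho(Z, Z)](z_0) = \overline{Y}[H_\rho(Z, Z)](z_0) = 0$ (a local-minimum consequence), leaving a manifestly non-negative remainder and thus the claimed inequality.

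The main obstacle is (2): the bookkeeping of third-order $\rho$-derivatives interacts non-trivially with the first jet of the extension $\hat{Y}$, and several derived identities must be orchestrated in tandem before the target cancellation emerges. Once (2) is established, the expansion for (3) unfolds without further surprises.
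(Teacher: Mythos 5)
Your part (1) is fine (it is essentially the argument of Lemma 3.4 of \cite{For-Her}, which the paper simply cites), but there is a genuine gap in part (2) --- precisely the step you flag as the main obstacle. Differentiating the boundary identity $H_{\rho}(Z,\hat{Y})+\langle\partial\rho,\overline{\hat{Y}}(Z)\rangle\equiv 0$ \emph{once} more tangentially cannot make $(YH_{\rho})(X,Z)(z_{0})$ appear at all: the only third-order derivatives of $\rho$ that arise come from differentiating $H_{\rho}(Z,\hat{Y})$, so they are contracted against $Z_{j}\overline{\hat{Y}}_{k}$, never against $\overline{Y}(Z_{j})\,\overline{Z}_{k}\,Y_{l}$; in the other term $\langle\partial\rho,\overline{\hat{Y}}(Z)\rangle$, where the factor $X$ lives, $\rho$ carries only one derivative, so producing the target would require \emph{two} further tangential derivatives, at which point fourth-order derivatives of $\rho$ and normal components of second derivatives of the fields enter, and the vanishings you invoke ($\overline{Y}[H_{\rho}(Z,Z)](z_{0})=0$ and Cauchy--Schwarz at $z_{0}$) cannot eliminate them. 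The missing idea is the polarization use of positivity: since $H_{\rho}(aY+bZ,aY+bZ)|_{b\Omega}\geq 0$ vanishes at $z_{0}$ for every $a,b\in\mathbb{C}$, every tangential first derivative of the off-diagonal entry $H_{\rho}(Y,Z)$ vanishes at $z_{0}$; the paper differentiates it along $X$ (tangential by (1)), removes the coefficient-derivative term by Cauchy--Schwarz since $Y$ is a null direction, and then uses the symmetry of $\frac{\partial^{3}\rho}{\partial z_{j}\partial\overline{z}_{k}\partial z_{l}}$ in $j,l$ to trade $(XH_{\rho})(Y,Z)$ for $(YH_{\rho})(X,Z)$. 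Nothing equivalent to this polarization-plus-symmetry step appears in your sketch, and without it the conclusion does not follow.

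Part (3) as written also has an unjustified claim. If you carry out the Leibniz expansion and use (2) together with the first-order vanishings and Cauchy--Schwarz, what is left besides $H_{\rho}(X,X)(z_{0})$ is $\sum_{j,k,l,m}\frac{\partial^{4}\rho}{\partial z_{j}\partial\overline{z}_{k}\partial z_{l}\partial\overline{z}_{m}}Z_{j}\overline{Z}_{k}Y_{l}\overline{Y}_{m}+2\Re\bigl[(YH_{\rho})(Z,V)\bigr]+H_{\rho}(V,V)$ at $z_{0}$, where $V$ has components $V_{k}=\sum_{l}Y_{l}\frac{\partial Z_{k}}{\partial z_{l}}$. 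This is \emph{not} manifestly non-negative: the middle term is a sign-indefinite third-order cross term that neither (2) nor Cauchy--Schwarz controls. Its non-negativity requires one more minimum-principle argument of the same flavor as in (2) --- the paper avoids the issue altogether by applying the weak-direction Hessian positivity to the Cauchy--Schwarz determinant $f=H_{\rho}(Z,Z)H_{\rho}(X,X)-|H_{\rho}(X,Z)|^{2}$, while within your expansion one could instead apply it to $H_{\rho}(Z^{(1)},Z^{(1)})$, $Z^{(1)}$ being the holomorphic first-order jet of $Z$ at $z_{0}$, whose Hessian at $z_{0}$ in the direction $Y$ is exactly the leftover sum. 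Either way, an extra argument is needed where you assert the remainder is manifestly non-negative.
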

 \begin{proof}[Proof of Lemma \ref{L:compare}]
   (1) That $X$ is complex tangential to $b\Omega$ at $z_{0}$ was shown in Lemma 3.4 of 
   \cite{For-Her}.
   
   (2) The plurisubharmonicity of $\rho$ says that 
   both $H_{\rho}(Y,Y)_{|_{b\Omega\cap U}}$ and $H_{\rho}(Z,Z)_{|_{b\Omega\cap U}}$
   attain a local minimum at $z_{0}$. In fact, the function $H_{\rho}(aY+bZ,aY+bZ)_{|_{b\Omega}}$, 
   $a,b\in\mathbb{C}$, attains a local minimum at $z_{0}$. This means that any tangential derivative of
   either one of those three functions must vanish at that point. In particular, we have 
   \begin{align*}
    0&=\langle\partial H_{\rho}(aY+bZ,aY+bZ),X\rangle(z_{0})\\
      &=|a|^{2}\langle\partial H_{\rho}(Y,Y),X\rangle(z_{0})
      +2\Re\left(a\overline{b}\langle\partial H_{\rho}(Y,Z),X\rangle
      \right)(z_{0})+|b|^{2}\langle\partial H_{\rho}(Z,Z),X\rangle(z_{0})\\
      &=2\Re\left(a\overline{b}\langle\partial H_{\rho}(Y,Z),X\rangle
      \right)(z_{0}).
   \end{align*}
   Since this is true for all $a,b\in\mathbb{C}$, it follows that $\langle\partial H_{\rho}(Y,Z),X\rangle$ 
   must vanish
   at $z_{0}$. But the plurisubharmonicity of $\rho$ at $z_{0}$ yields
   \begin{align*}
     \langle\partial H_{\rho}(Y,Z),X\rangle=\left(XH_{\rho}\right)(Y,Z)(z_{0})=
     \left(YH_{\rho}\right)(X,Z)(z_{0}),
   \end{align*}   
   which proves the claim.
   
   (3) Consider the function
   \begin{align*}
     f(z)=\left(
     H_{\rho}(Z,Z)\cdot H_{\rho}(X,X)-\left|H_{\rho}(X,Z)\right|^{2}\right)(z)\;\;\text{for}\;\;z\in U.
   \end{align*}
   Note that $f_{|_{b\Omega\cap U}}$ attains a local minimum at $z_{0}$. Since $Y$ is a weak direction at 
   $z_{0}$, it follows that $H_{f}(Y,Y)(z_{0})$ is 
   non-negative. This implies that
   \begin{align}\label{E:comparetemp}
     \left(H_{H_{\rho}(Z,Z)}(Y,Y)\cdot H_{\rho}(X,X)\right)(z_{0})\geq
     \left|
     \langle \partial H_{\rho}(X,Z), Y\rangle(z_{0})
     \right|^{2},
   \end{align}
   where we used that both $H_{\rho}(Z,Z)$ and any tangential derivative of $H_{\rho}(Z,Z)$ at $z_{0}$ 
   are zero. We compute
   \begin{align*}
     \langle\partial H_{\rho}(X,Z),Y\rangle(z_{0})
     =(YH_{\rho})(X,Z)(z_{0})+H_{\rho}(X,X)(z_{0})
     +H_{\rho}\left(\sum_{j=1}^{n}Y(X_{j})\frac{\partial}{\partial z_{j}},Z \right)(z_{0}).
   \end{align*}
   The first term on the right hand side equals zero by part (2) of Lemma \ref{L:compare}, and the third 
   term is zero as well since $\rho$ plurisubharmonic at $z_{0}$ and $Z$ is a weak direction  there. Therefore,
   \eqref{E:comparetemp} becomes
   \begin{align*}
     H_{\rho}(X,X)(z_{0})\leq H_{H_{\rho}(Z,Z)}(Y,Y)(z_{0}).
   \end{align*}
 \end{proof}
 Now we can proceed to show Proposition \ref{P:compare}.
 \begin{proof}[Proof of Proposition \ref{P:compare}]
   Recall that we are working with vectors $W\in\mathbb{C}^{n}$ contained in the span of
   $W^{1}(z_{0}),\dots,W^{n-1-i}(z_{0})$. We consider the function
   \begin{align*}
     h(z)=\left(
       \sigma\cdot H_{\rho}(N,N)-\sum_{\alpha=1}^{n-1-i}\left|
       H_{\rho}(N,W^{\alpha})
       \right|^{2}
     \right)(z) \;\;\text{for}\;\;z\in U,
   \end{align*}
   where $\sigma=\sum_{\alpha=1}^{n-1-i}H_{\rho}(W^{\alpha},W^{\alpha})$. Again, since $\rho$ is 
   plurisubharmonic on $b\Omega$, $h_{|_{b\Omega\cap V}}$ has a local minimum at $z_{0}$. 
   This, together with $H_{\rho}(W,W)(z_{0})=0$, implies that
   $H_{h}(W,W)(z_{0})$ is non-negative. Since both $\sigma$ and $\langle\partial\sigma,
   W\rangle=0$ vanish at $z_{0}$, 
   it 
   follows that
   \begin{align*}
     &\left(
     H_{\sigma}(W,W)\cdot H_{\rho}(N,N)
     \right)(z_{0})\\
     &\geq
     \sum_{\alpha=1}^{n-1-i}\left|
     (WH_{\rho})(N,W^{\alpha})+H_{\rho}\left(\sum_{j=1}^{n}W(N_{j})\frac{\partial}{\partial z_{j}},W^{\alpha}
     \right)
     +H_{\rho}\left(
     N,\sum_{j=1}^{n}\overline{W}(W_{j}^{\alpha})\frac{\partial}{\partial z_{j}}
     \right)
     \right|^{2}(z_{0})\\
      &=
     \sum_{\alpha=1}^{n-1-i}\left|
     (WH_{\rho})(N,W^{\alpha})+H_{\rho}\left(
     N,\sum_{j=1}^{n}\overline{W}(W_{j}^{\alpha})\frac{\partial}{\partial z_{j}}
     \right)
     \right|^{2}(z_{0}),
   \end{align*}
   where the last step follows from $\rho$ being plurisubharmonic at $z_{0}$ and the 
   $W^{\alpha}$'s being 
   weak directions  there.
   Moreover, we have that $(WH_{\rho})(N,W^{\alpha})$ equals $(NH_{\rho})(W,W^{\alpha})$ at $z_{0}$.  Writing
   $X^{\alpha}=\sum_{j=1}^{n}\overline{W}(W_{j}^{\alpha})\frac{\partial}{\partial z_{j}}$, we obtain
   \begin{align*}
     \left(
       H_{\sigma}(W,W)\cdot H_{\rho}(N,N)
     \right)(z_{0})
     &\geq
     \sum_{\alpha=1}^{n-1-i}\left|
       (NH_{\rho})(W,W^{\alpha})+H_{\rho}(N,X^{\alpha})
     \right|^{2}(z_{0})\\
     &\geq
     \sum_{\alpha=1}^{n-1-i}\left(
     \frac{1}{2}\left|
     (NH_{\rho})(W,W^{\alpha})
     \right|^{2}-3
     \left|H_{\rho}(N,X^{\alpha})\right|^{2}
     \right)(z_{0}).
   \end{align*}
   Here the last step follows from the (sc)-(lc) inequality.
   Since $\rho$ is plurisubharmonic at $z_{0}$, we can apply the Cauchy--Schwarz inequality
   \begin{align*}
     \left|
     H_{\rho}(N,X^{\alpha})(z_{0})
     \right|^{2}
     &\leq
     \left(
     H_{\rho}(N,N)\cdot H_{\rho}(X^{\alpha},X^{\alpha})
     \right)(z_{0})\\
    &\leq \left(
     H_{\rho}(N,N)\cdot H_{H_{\rho}(W^{\alpha},W^{\alpha})}(W,W)
     \right)(z_{0}),
   \end{align*}
   where the last estimate follows by part (3) of Lemma \ref{L:compare} with $W$ and $W^{\alpha}$ 
   in place of $Y$ and $Z$, respectively. Thus we have
   \begin{align*}
    \sum_{\alpha=1}^{n-1-i}\left|H_{\rho}(N,X^{\alpha})(z_{0}) \right|^{2}
    \leq
    \left(H_{\rho}(N,N)\cdot H_{\sigma}(W,W)\right)(z_{0}),
   \end{align*}
   which implies that
   \begin{align}\label{E:comparetemp2}
     \sum_{\alpha=1}^{n-1-i}\left|
     (NH_{\rho})(W,W^{\alpha})(z_{0})
     \right|^{2}
     \leq
     8\left(H_{\sigma}(W,W)\cdot H_{\rho}(N,N)\right)(z_{0}).
   \end{align} 
   Since $W$ is a linear combination of $\{W^{\alpha}(z_{0})\}_{\alpha=1}^{n-1-i}$, we can write
   $W=\sum_{\alpha=1}^{n-1-i}a_{\alpha}W^{\alpha}(z_{0})$ for some scalars $a_{\alpha}\in\mathbb{C}$.
   Because of the linear independence of the $W^{\alpha}$'s on $V$, there exists a constant $K_{1}>0$ 
   such 
   that
   \begin{align*}
     \sum_{\alpha}^{n-1-i}|b_{\alpha}|^{2}\leq K_{1}\left|
     \sum_{\alpha=1}^{n-1-i}b_{\alpha}W^{\alpha}(z)
     \right|^{2}\;\text{for all}\;z\in b\Omega\cap V,\;b_{\alpha}\in\mathbb{C}.
   \end{align*}
   Thus it follows that
   \begin{align*}
     \sum_{\alpha=1}^{n-1-i}
     \left|
     (NH_{\rho})(W,W^{\alpha})(z_{0})
     \right|^{2}
     &\geq
     \frac{1}{K_{1}|W|^{2}}
     \sum_{\alpha=1}^{n-1-i}\left|(NH_{\rho})(W,a_{\alpha}W^{\alpha})(z_{0})\right|^{2}\\
     &\geq
     \frac{1}{K_{1}(n-1-i)|W|^{2}}
     \left|(NH_{\rho})(W,W)(z_{0})\right|^{2}.
   \end{align*}
   Hence, \eqref{E:comparetemp2} becomes
   \begin{align*}
     \left|(NH_{\rho})(W,W)(z_{0})\right|^{2}\leq
     8K_{1}(n-1-i)|W|^{2}\left(H_{\sigma}(W,W)\cdot H_{\rho}(N,N)\right)(z_{0}).
   \end{align*}
   Let $K_{2}>0$ be a constant such that
   $H_{\rho}(N,N)_{|_{b\Omega}}\leq K_{2}$ holds. Setting $K=8K_{1}K_{2}(n-1-i)$, 
   it follows that
   \begin{align*}
     \left|(NH_{\rho})(W,W)(z_{0})\right|^{2}\leq K|W|^{2}H_{\sigma}(W,W)(z_{0}).
   \end{align*}
 \end{proof}
 
 Recall that we are considering a fixed boundary point $p\in\Sigma_{i}$ and all $q\in\Omega\cap V$, 
 $\pi(q)=p$, for 
 some sufficiently small neighborhood of $p$. After possibly shrinking $V$ it follows by Taylor's theorem 
 that
 \begin{align*}
   H_{\sigma}(W,W)(q)=H_{\sigma}(W,W)(\pi(q))+\mathcal{O}\left(d_{b\Omega}(q)\right)|W|^{2}
 \end{align*}
 holds for all $q\in\Omega\cap V$ with $\pi(q)=p$.
 Using this and Proposition \ref{P:compare}, we get for $q\in\Omega\cap V$ with 
 $\pi(q)=p$ that 
 \begin{align*}
     \left|(NH_{\rho})(W,W)(p)\right|^{2}\leq K|W|^{2}\left[H_{\sigma}(W,W)(q)
     +\mathcal{O}\left(d_{b\Omega}(q)\right)|W|^{2}\right].
 \end{align*}
 Therefore, our basic estimate \eqref{E:BasicTaylorr} of the complex Hessian of $r$ in direction $W$
 becomes
 \begin{align*}
   H_{r}(W,W)(q)\geq 
  - d_{b\Omega}(q)e^{-C\sigma(q)}(NH_{\rho})(W,W)(p)
   &-r(q)\frac{C}{K|W|^{2}}\left|NH_{\rho}(W,W)\right|^{2}(p)\\
   &+\mathcal{O}\left(r^{2}(q)\right)|W|^{2}.
 \end{align*}
 Let $c_{1}>0$ be such that $d_{b\Omega}(z)\leq c_{1}|\rho(z)|$ for all $z$ in $V$.
 Then, if we choose
 \begin{align}\label{E:chooseC}
   C\geq\max_{z\in b\Omega, T\in\mathbb{C}^{n}, |T|=1}
   \left\{
     0,\frac{c_{1}\Re\left[(NH_{\rho})(T,T)(z)\right]-\frac{\epsilon}{2}}{\left|(NH_{\rho})(T,T)(z)\right|^{2}}K
   \right\},
 \end{align}
 we obtain, after possibly shrinking $V$,
 \begin{align}\label{E:BasicTaylorrfinal}
   H_{r}(W,W)(q)\geq -\epsilon |r(q)|\cdot|W|^{2}
 \end{align}
 for all $q\in\Omega\cap V$ with $\pi(q)=p$ and $W\in\mathbb{C}^{n}$  in the span of
 $\{W^{\alpha}(p)\}_{\alpha=1}^{n-1-i}$. In fact, after possibly shrinking $V$, \eqref{E:BasicTaylorrfinal} holds with, say, $2\epsilon$ in place of $\epsilon$ for all
 $q\in\Omega\cap V$ satisfying $\pi(q)\in\Sigma_{i}\cap V$ and $W\in\mathbb{C}^{n}$ belonging to
 the span of $\{W^{\alpha}(\pi(q))\}_{\alpha=1}^{n-1-i}$. 
 
 A problem with this construction is that 
 $r$ is not necessarily plurisubharmonic at those weakly pseudoconvex boundary points which are not
 in $\Sigma_{i}$. This possible loss of plurisubharmonicity occurs because the 
 $W^{\alpha}$'s are not necessarily weak directions at those points. This means,
 that we can not simply copy this construction with $r$ in place of $\rho$ to get good estimates near, say, 
 $\Sigma_{i+1}$. Let us be a more explicit. Suppose $\tilde{p}\in\Sigma_{i+1}\cap V$ is such that
 at least one of the $W^{\alpha}$'s is not a weak direction at $\tilde{p}$. That means, if 
 $T$ is a weak complex tangential direction at $\tilde{p}$, then neither does $|\langle\partial\sigma(\tilde{p}),T\rangle|^{2}$ have to be 
 zero nor does $H_{\sigma}(T,T)(\tilde{p})$ have to be non-negative. In view of \eqref{E:explaincutoff}, 
 this says that it might actually happen that
 $(NH_{r})(T,T)(\tilde{p})$ is greater than $(NH_{\rho})(T,T)(\tilde{p})$ for such a vector $T$. That is, by 
 removing the obstruction term at $p$ we might 
 have worsened the situation at $\tilde{p}$. 
 One might think that this does not cause any real problems since we 
 still need to introduce a correcting function $\tilde{\sigma}$ to remove the obstruction to 
 \eqref{E:Main1} on the set $\Sigma_{i+1}\cap V$. However, it might be the case that
 $(NH_{\rho})(T,T)(\tilde{p})=0$. In this case we do not know whether $H_{\tilde{\sigma}}(T,T)$ is 
 strictly positive at $\tilde{p}$, i.e., we do not know whether $H_{\tilde{\sigma}}(T,T)(\tilde{p})$ can make 
 up for any obstructing terms at $\tilde{p}$ introduced by $\sigma$. This says that we need to smoothly 
 cut off $\sigma$ in a manner such that, away from $\Sigma_{i}\cap V$, 
 $|\langle\partial\sigma,T\rangle|^{2}$ stays close to zero and $H_{\sigma}(T,T)$ does not become too
 negative (relative to $\epsilon|T|^{2}$). The construction of such a cut off function will be done in the 
 next section.
 
 \medskip
 
 \section{The cutting off}\label{S:cutoff}

   Let us recall our setting: we are considering a given boundary point $p\in\Sigma_{i}$,  
   $0\leq i \leq n-2$, $V$ a 
   neighborhood of $p$ and smooth, linearly independent $(1,0)$-vector fields 
   $\{W^{\alpha}\}_{\alpha=1}^{n-1-i}$, $\alpha\in\{1,\dots,n-1-i\}$ on $V$, which are complex tangential
   to $b\Omega$ on $b\Omega\cap V$ and satisfy $H_{\rho}(W^{\alpha},W^{\alpha})=0$ on 
   $\Sigma_{i}\cap V$. From now on, we also suppose that $V$ and the $W^{\alpha}$'s are 
   chosen such that the 
   span of $\{W^{\alpha}(z)\}_{\alpha=1}^{n-1-i}$ contains the null space of the Levi form of $\rho$ at $z$ 
   for all $z\in\Sigma_{j}\cap V$ for $j\in\{i+1,\dots,n-2\}$. This can be done by first selecting
   smooth $(1,0)$-vector fields $\{S^{\beta}(z)\}_{\beta=1}^{i}$ which are complex tangential to
   $b\Omega$ on $b\Omega\cap V$ for some neighborhood $V$ of $p$ and orthogonal to each other
    with respect to the Levi form of $\rho$ such that $H_{\rho}(S^{\beta},S^{\beta})>0$ holds on
    $b\Omega\cap V$ after possibly shrinking $V$. Then one completes the basis of the complex
    tangent space with smooth $(1,0)$-vector fields $\{W^{\alpha}(z)\}_{\alpha=1}^{n-1-i}$ such that
    the $W^{\alpha}$'s are orthogonal to the $S^{\beta}$'s with respect to the Levi form of $\rho$.
   
   Let $V'\ssubset V$ be another neighborhood of $p$. Let $\zeta\in C^{\infty}_{c}(V,[0,1])$ be a function
    which equals $1$ on $V'$. For given $m>2$, let $\chi_{m}\in C^{\infty}(\mathbb{R})$ be
    an increasing function with $\chi_{m}(x)=1$ for all $x\leq 1$ and $\chi_{m}(x)=e^{m}$ for all
    $x\geq e^{m}$ such that
     \begin{align*}
      \frac{x}{\chi_{m}(x)}\leq 2,\;\;
      \chi_{m}'(x)\leq 2,\;\text{and}\;\;
      x\cdot\chi_{m}''(x)\leq 4\;\;\text{for all}\;\; x\in[1,e^{m}].
    \end{align*}
    Set $\chi_{m,\tau}(x)=\chi_{m}\left(\frac{x}{\tau}\right)$ for given $\tau>0$. The above properties then 
    become
     \begin{align*}
      \frac{x}{\chi_{m,\tau}(x)}\leq 2\tau,\;\;
      \chi_{m,\tau}'(x)\leq \frac{2}{\tau},\;\text{and}\;\;
      x\cdot\chi_{m,\tau}''(x)\leq \frac{4}{\tau}\;\;\text{for all}\;\; x\in[\tau,\tau e^{m}].
    \end{align*}
   Set $g_{m,\tau}(x)=1-\frac{\ln(\chi_{m,\tau}(x))}{m}$. It follows by a straightforward computation that
   \begin{align}\label{E:propsg}
     &g_{m,\tau}(x)=1\;\text{for}\;x\leq\tau,\;\;0\leq g_{m,\tau}(x)\leq 1\;\text{for all}\;x\in\mathbb{R},
     \;\text{and}\notag\\
     &|g_{m,\tau}'(x)|\leq\frac{4}{m}\cdot\frac{1}{x},\;\;g_{m,\tau}''(x)\geq-\frac{8}{m}\cdot\frac{1}{x^{2}}\;\;
     \text{for}\;x\in(\tau, \tau e^{m}).
   \end{align}
   For given $m,\;\tau> 0$ we define
   \begin{align*}
     s_{m,\tau}(z)=\zeta(z)\cdot\sigma(z)\cdot g_{m,\tau}(\sigma(z))\;\;\text{for}\;z\in V
   \end{align*}
   and $s_{m,\tau}=0$ outside of $V$. This function has the properties described at the
   end of Section \ref{S:modification} if $m,\tau$ are chosen appropriately:
   \begin{lemma}\label{L:cutoff}
     For all $\delta>0$, there exist $m,\;\tau>0$ such that 
     $s_{m,\tau}$ satisfies:
     \begin{enumerate}
       \item[(i)] $s_{m,\tau}=\zeta\sigma$ for $\sigma\in[0,\tau]$,
       \item[(ii)] $0\leq s_{m,\tau}\leq\delta$ on $b\Omega$.
     \end{enumerate}
     Moreover, if $z\in \left(b\Omega\cap V\right)\setminus\Sigma_{i}$ and 
     $T\in\mathbb{C}T_{z}b\Omega$, then
     \begin{enumerate}
       \item[(iii)] $|\langle\partial s_{m,\tau}(z),T\rangle|\leq\delta|T|$,
       \item[(iv)] $H_{s_{m,\tau}}(T,T)(z)\geq-\delta|T|^{2}$ if $T\in\Span\{W^{\alpha}(z)\}$.
     \end{enumerate}
   \end{lemma}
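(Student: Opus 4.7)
Parts (i) and (ii) are bookkeeping on $g_{m,\tau}$. Property (i) will follow immediately from $g_{m,\tau}(x)=1$ for $x\le\tau$. For (ii), the plan is to bound $\sigma g_{m,\tau}(\sigma)$ uniformly on $b\Omega$: substituting $y=\sigma/\tau$ reduces this to estimating $y g_m(y)$ on $[1,e^m]$, and the stated properties of $\chi_m$ give $\sup \sigma g_{m,\tau}(\sigma)\le 2\tau e^{m-1}/m$. Given $\delta>0$, one fixes $m$ and then shrinks $\tau$ to achieve (ii).

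For (iii), I will differentiate $s_{m,\tau}=\zeta F(\sigma)$ with $F(x)=x g_{m,\tau}(x)$ to obtain
\[
\langle\partial s_{m,\tau},T\rangle=F(\sigma)\langle\partial\zeta,T\rangle+\zeta F'(\sigma)\langle\partial\sigma,T\rangle.
\]
The first summand is absorbed by the sup bound from (ii). For the second, the properties of $g_{m,\tau}$ give $|F'|=|g_{m,\tau}+\sigma g_{m,\tau}'|\le 1+4/m$, and I will invoke the elementary fact that any smooth nonnegative function $f$ on a manifold satisfies $|\nabla f|^2\le 2\|f\|_{C^2}f$; applied to $\sigma|_{b\Omega}$ in a tangential direction this gives $|\langle\partial\sigma,T\rangle|\le C\sqrt{\sigma(z)}\,|T|$ on $b\Omega$. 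Since $F'$ vanishes outside $\{\sigma\le\tau e^m\}$, this piece is bounded by $C'\sqrt{\tau}\,e^{m/2}|T|$, which is made $\le\delta|T|$ by shrinking $\tau$.

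For (iv), the product-rule expansion reads
\[
H_{s_{m,\tau}}(T,T)=F\,H_\zeta(T,T)+2F'\Re\!\big(\langle\partial\zeta,T\rangle\overline{\langle\partial\sigma,T\rangle}\big)+\zeta F''|\langle\partial\sigma,T\rangle|^2+\zeta F'\,H_\sigma(T,T).
\]
The first two summands will be disposed of exactly as in (ii)--(iii). For the third, I will combine $F''(\sigma)=2g_{m,\tau}'+\sigma g_{m,\tau}''\ge -16/(m\sigma)$ with $|\langle\partial\sigma,T\rangle|^2\le C\sigma|T|^2$ to obtain the lower bound $-16C/m\,|T|^2$, forced $\ge -\delta|T|^2/2$ by taking $m$ large.

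The main obstacle is the fourth term $\zeta F'(\sigma)H_\sigma(T,T)$: since $|F'|\le 1+4/m$ does not decay and $H_\sigma(T,T)$ is only a priori bounded by a constant, this piece cannot be controlled pointwise. My plan is to exploit that $F'$ is supported where $\sigma\le\tau e^m$ together with the special structure of $\sigma$ and the $W^\alpha$'s. Because $\sigma$ is continuous on the compact set $b\Omega\cap\overline{V}$ and vanishes precisely on $\Sigma_i\cap V$, the sublevel sets $\{\sigma\le\eta\}\cap b\Omega$ collapse to $\Sigma_i\cap V$ as $\eta\to 0^+$, so shrinking $\tau$ forces the effective support of $F'$ into any prescribed neighborhood of $\Sigma_i$. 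Next, at any $z_0\in\Sigma_i\cap V$ the $n-1-i$ linearly independent vectors $W^\alpha(z_0)$ are weak directions and therefore form a basis of the (equidimensional) null space of the Levi form at $z_0$; hence every $T_0\in\Span\{W^\alpha(z_0)\}$ is itself a weak direction. A second-variation argument, fitting a complex-tangent curve $\gamma\subset b\Omega$ through $z_0$ with $\gamma'(0)=T_0$ and using both $\sigma\ge 0$ on $b\Omega$ and the vanishing of all tangential derivatives of $\sigma$ at $z_0$, then yields $H_\sigma(T_0,T_0)(z_0)\ge 0$. Continuity of $H_\sigma$ combined with smoothness of the $W^\alpha$'s upgrades this to
\[
H_\sigma(T,T)(z)\ge -C\,\mathrm{dist}(z,\Sigma_i)\,|T|^2
\]
for $T\in\Span\{W^\alpha(z)\}$ and $z\in b\Omega\cap V$ near $\Sigma_i$. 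Finally, choosing $\tau$ so small that $\{\sigma\le\tau e^m\}\cap b\Omega\cap V$ lies within distance $\delta/(2C(1+4/m))$ of $\Sigma_i$ will bound term four by $\delta|T|^2/2$, closing the estimate.
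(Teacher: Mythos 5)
Your proposal is correct and follows essentially the same route as the paper: the same product-rule expansion into four terms, the McNeal-type gradient inequality $|\langle\partial\sigma,T\rangle|^{2}\lesssim\sigma|T|^{2}$ on the support of $\zeta$, the bound $F''(\sigma)\geq-16/(m\sigma)$ neutralized by choosing $m$ large first, and control of the $\zeta F'H_{\sigma}(T,T)$ term via nonnegativity of $H_{\sigma}(T,T)$ for $T\in\Span\{W^{\alpha}\}$ at zeros of $\sigma$ together with compactness, choosing $\tau$ (hence $\tau e^{m}$) small last. The only cosmetic differences are that you treat $S_{1}$ and $S_{2}$ at once through $F(x)=xg_{m,\tau}(x)$ and use a quantitative distance bound where the paper uses a soft neighborhood argument; note only that $\{\sigma=0\}\cap b\Omega\cap V$ need not equal $\Sigma_{i}\cap V$ (it may meet lower-rank strata unless $V$ is chosen to avoid them, as in Section 5), so the collapse and the Lipschitz bound should be phrased with respect to the zero set of $\sigma$ on $\overline{\supp\zeta}$ -- where $H_{\sigma}(T,T)\geq0$ still holds by your same second-variation argument -- which changes nothing in the estimate.
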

   
   Note that part (iv) of Lemma \ref{L:cutoff} in particular says that if 
   $z\in\cup_{j=i+1}^{n-2}\Sigma_{j}\cap V$, then $H_{s_{m,\tau}}(T,T)(z)\geq-\delta|T|^{2}$ for all $T$ 
   which are weak complex tangential vectors at $z$.
   
   To prove Lemma \ref{L:cutoff}, we will need $|\langle\partial\sigma,T\rangle|^{2}
   \lesssim\sigma|T|^{2}$ on
   $\overline{\supp(\zeta)}\cap b\Omega$. That this is in fact true we learned from J. D. McNeal.
   
  \begin{lemma}[\cite{McN}]\label{L:McNeal}
     Let $U\ssubset\mathbb{R}^{n}$ be open. Let $f\in C^{2}(U)$ be a non-negative  
     function on $U$. Then for any compact set $K\ssubset U$, there exists a constant $c>0$ such that
     \begin{align}\label{E:McNeal}
       |\nabla f(x)|^{2}\leq c f(x)\;\;\text{for all}\;\;x\in K.
     \end{align}
   \end{lemma}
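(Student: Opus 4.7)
The plan is to reduce the inequality to a one-dimensional Taylor estimate in the direction of steepest descent, exploiting the fact that $f\geq 0$ forces the linear term in Taylor's expansion to be controlled by the second-order term.

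First I would enlarge $K$ slightly: choose a compact set $K'$ with $K\subset\subset K'\subset\subset U$, let $d>0$ be the Euclidean distance from $K$ to the complement of $K'$, and set
\begin{align*}
M:=\sup_{y\in K'}\|\mathrm{Hess}\,f(y)\|,\qquad L:=\sup_{y\in K'}|\nabla f(y)|,
\end{align*}
both of which are finite since $f\in C^{2}(U)$. Fix $x\in K$; the desired inequality is trivial if $\nabla f(x)=0$, so assume $\nabla f(x)\neq 0$ and set $v=\nabla f(x)/|\nabla f(x)|$. For every $t\in[0,d]$ the segment from $x$ to $x-tv$ stays in $K'$, so Taylor's theorem with remainder gives some $\xi$ on that segment satisfying
\begin{align*}
0\leq f(x-tv)=f(x)-t|\nabla f(x)|+\tfrac{t^{2}}{2}\langle\mathrm{Hess}\,f(\xi)v,v\rangle\leq f(x)-t|\nabla f(x)|+\tfrac{Mt^{2}}{2}.
\end{align*}
Hence $t|\nabla f(x)|\leq f(x)+\tfrac{M}{2}t^{2}$ for every $t\in[0,d]$.

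Now I would optimize in $t$. The unconstrained minimizer of the right-hand side minus the left-hand side is $t_{0}=|\nabla f(x)|/M$. If $t_{0}\leq d$, plugging $t=t_{0}$ into the displayed inequality gives $|\nabla f(x)|^{2}\leq 2Mf(x)$, which is the asserted bound with $c=2M$. If instead $t_{0}>d$, I plug in $t=d$, obtaining $d|\nabla f(x)|\leq f(x)+\tfrac{Md^{2}}{2}$; combined with $|\nabla f(x)|>Md$ this forces $f(x)\geq\tfrac{Md^{2}}{2}$, and therefore
\begin{align*}
|\nabla f(x)|^{2}\leq L^{2}=\frac{2L^{2}}{Md^{2}}\cdot\frac{Md^{2}}{2}\leq\frac{2L^{2}}{Md^{2}}\,f(x).
\end{align*}
Taking $c:=\max\bigl\{2M,\,2L^{2}/(Md^{2})\bigr\}$ yields \eqref{E:McNeal} uniformly on $K$.

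The argument is essentially routine; the only mildly delicate point is that the natural minimizer $t_{0}=|\nabla f(x)|/M$ may fall outside $[0,d]$, which is why I separate the two cases above rather than optimizing globally. No complex structure is used, so the statement holds as advertised for real $C^{2}$ functions, and will be applied in subsequent sections to $f=\sigma$ on $\overline{\supp(\zeta)}\cap b\Omega$ to obtain $|\langle\partial\sigma,T\rangle|^{2}\lesssim\sigma|T|^{2}$.
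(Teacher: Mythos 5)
Your proof is correct and rests on the same core mechanism as the paper's (McNeal's) argument: expand to second order along the gradient direction and use $f\geq 0$ to force the linear term to be dominated by $f(x)$ plus a quadratic remainder. Where you differ is only in how uniformity over $K$ is achieved. The paper first replaces $f$ by a globally defined non-negative function $F$ agreeing with $f$ on $K$ and vanishing off $U$ (e.g.\ $F=\chi f$ for a suitable cutoff $\chi$), so that the quadratic inequality $0\leq f(x)+\left|\nabla f(x)\right|t+nLt^{2}$ holds for \emph{all} $t\in\mathbb{R}$ and the bound $\left|\nabla f(x)\right|^{2}\leq 4nLf(x)$ falls out of the discriminant, with a constant controlled by second derivatives alone. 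You avoid constructing any extension (a step the paper asserts without detail) by working inside a compact shell $K'$ and restricting the step size to $t\in[0,d]$; the price is the two-case constrained optimization and a constant that also involves the gradient bound $L$ and the distance $d$ --- harmless, since the lemma only claims existence of some $c$ depending on $K$. One small point to tidy up: the quantities $t_{0}=\left|\nabla f(x)\right|/M$ and $c=\max\bigl\{2M,\,2L^{2}/(Md^{2})\bigr\}$ presuppose $M>0$; either observe that $M=0$ gives $d\left|\nabla f(x)\right|\leq f(x)$ directly, or simply replace $M$ by $\max\{M,1\}$ throughout, since only upper bounds on the Hessian are used.
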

   Since the proof by McNeal is rather clever, and since we are not aware of it being published, 
   we shall give it here.
   \begin{proof}
     Let $F$ be a smooth, non-negative function such that $F=f$ on $K$ and $F=0$ on 
     $\mathbb{R}^{n}\setminus U$.
     For a given $x\in K$, we have for all $h\in\mathbb{R}^{n}$ that
     \begin{align}\label{E:McNealTaylor}
       0\leq F(x+h)&=F(x)+\sum_{k=1}^{n}\frac{\partial F}{\partial x_{k}}(x)h_{k}
       +\frac{1}{2}\sum_{k,l=1}^{n}\frac{\partial^{2} F}{\partial x_{k}\partial x_{l}}(\xi)h_{k}h_{l}\notag\\
       &=f(x)+\sum_{k=1}^{n}\frac{\partial f}{\partial x_{k}}(x)h_{k}+
       \frac{1}{2}\sum_{k,l=1}^{n}\frac{\partial^{2} F}{\partial x_{k}\partial x_{l}}(\xi)h_{k}h_{l}
     \end{align}
     holds for some $\xi\in U$.
     Note that \eqref{E:McNeal} is true if $\left(\nabla f\right)(x)=0$. So assume now that
     $\left(\nabla f\right)(x)\neq 0$ and  choose
     $h_{k}=\frac{\frac{\partial f}{\partial x_{k}}(x)}{\left|\left(\nabla f\right)(x)\right|}\cdot t$ for
     $t\in\mathbb{R}$. Then
     \eqref{E:McNealTaylor} becomes
     \begin{align*}
       0\leq f(x)+\left|\left(\nabla f\right)(x)\right|t
       +nL\cdot\frac{\sum_{k=1}^{n}\left|\frac{\partial f}{\partial x_{k}}(x)\right|^{2}}{\left|\left(\nabla f\right)
       (x) \right|^{2}}
       \cdot t^{2}\;\;\text{for all}\;\;t\in\mathbb{R},
     \end{align*}
     where $L=\frac{1}{2}\sup
     \left\{\left|\frac{\partial^{2}F}{\partial x_{k}\partial x_{l}}(\xi)
     \right|\;|\;\xi\in U,\;1\leq k,l\leq n\right\}$.  
   Therefore, \eqref{E:McNealTaylor} becomes
   \begin{align*}
     0\leq f(x)+\left|\left(\nabla f\right)(x)\right|\cdot t+nL\cdot t^{2}\;\;\text{for all}\;\;t\in\mathbb{R}.
   \end{align*}
   In particular, the following must hold for all 
   $t\in\mathbb{R}$:
   \begin{align*}
     -\frac{f(x)}{nL}+\frac{\left|\left(\nabla f\right)(x)\right|^{2}}{(2nL)^{2}}
     \leq
     \left(
     t+\frac{\left|\left(\nabla f\right)(x)\right|}{2nL}
     \right)^{2},
   \end{align*}
   which implies that
   \begin{align*}
     \left|\left(\nabla f\right)(x)\right|^{2}\leq 4nL\cdot f(x).
   \end{align*}
 \end{proof}
   We can assume that $V$ is such that there exists a diffeomorphism $\phi:V\cap b\Omega
   \longrightarrow U$ for some open set $U\ssubset\mathbb{R}^{2n-1}$. 
   Set $f:=\sigma\circ\phi^{-1}$. Then $f$ satisfies the
   hypotheses of  Lemma \ref{L:McNeal}. Hence we get that there exists a constant $c>0$ such that
   $\left|\left(\nabla f\right)(x)\right|^{2}\leq cf(x)$ for all $x\in K=\phi(\overline{\supp(\zeta)})$. This implies 
   that there exists a constant $c_{1}>0$, depending on $\phi$, such that
   \begin{align}\label{E:McNealapplied}
     \left|\langle\partial\sigma(z),T\rangle\right|^{2}\leq c_{1}\sigma(z)|T|^{2}\;\;
     \text{for all}\;\; z\in\overline{\supp(\zeta)}\;\text{and}\; T\in\mathbb{C}T_{z}b\Omega.
   \end{align}
   Now we can prove Lemma \ref{L:cutoff}.
   \begin{proof}[Proof of Lemma \ref{L:cutoff}]
     Note first that $s_{m,\tau}$ is identically zero 
     on $b\Omega\setminus V$ for any $m>2$ and $\tau>0$.

     Now let $\delta>0$ be given, let $m$ be a large, positive number, fixed and to be chosen later 
     (that is, in the proof of (iv)). 
     Below we will show how to 
     choose $\tau>0$ once $m$ has been chosen.
     
     Part (i) follows directly from the definition of $s_{m,\tau}$ for any choice of $m>2$ and $\tau>0$. 
     Part (ii) also follows 
     straightforwardly, if $\tau>0$ is such that $\tau e^{m}\leq\delta$.
     Notice that for all $z\in b\Omega\cap V$ with $\sigma(z)>\tau e^{m}$, $s_{m,\tau}(z)=0$, and hence  
     (iii), (iv) hold trivially there. Thus, to
     prove (iii) and (iv) we only need to consider the two sets
     \begin{align*}
       S_{1}=\{z\in b\Omega\cap V\;|\;\sigma(z)\in(0,\tau)\}\;\;\text{and}\;\;
       S_{2}=\{z\in b\Omega\cap V\;|\;\sigma(z)\in[\tau,\tau e^{m}]\}.
     \end{align*}
     
     \medskip
     
     \emph{Proof of (iii):} If $z\in S_{1}$, then $s_{m,\tau}(z)=\zeta(z)\cdot\sigma(z)$ and
     if $T\in\mathbb{C}T_{z}b\Omega$, we get
     \begin{align*}
       \left|\langle\partial s_{m,\tau}(z),T\rangle
       \right|&=
       \left|
       \sigma\cdot\langle\partial\zeta,T\rangle+\zeta\cdot\langle\partial\sigma, T\rangle
       \right|(z)\\
       &\overset{\eqref{E:McNealapplied}}{\leq} \left(c_{2}\sigma(z)
       +\left(c_{1}\sigma(z)\right)^{\frac{1}{2}}\right)|T|, \;
       \text{where}\;\; c_{2}=\max_{z\in b\Omega\cap V}|\partial\zeta(z)|.
     \end{align*}
     Thus, if we choose $\tau>0$ such that, $c_{2}\tau+(c_{1}\tau)^{\frac{1}{2}}\leq\delta$, then
     (iii) holds on the set $S_{1}$.\\
     Now suppose that $z\in S_{2}, T\in\mathbb{C}T_{z}b\Omega$ and compute:
     \begin{align*}
       \left|
       \langle\partial s_{m,\tau}(z),T\rangle
       \right|
       &=\left|
        \sigma\cdot g_{m,\tau}(\sigma)\cdot\langle\partial\zeta,T\rangle
        +\zeta\cdot\langle\partial\sigma,T\rangle
        \left(
        g_{m,\tau}(\sigma)+\sigma\cdot g_{m,\tau}'(\sigma)
        \right)
       \right|(z)\\
       &\overset{\eqref{E:McNealapplied},\eqref{E:propsg}}{\leq}
       \left(
       c_{2}\sigma(z)+\left(c_{1}\sigma(z)\right)^{\frac{1}{2}}\cdot\left(1+\frac{4}{m}\right)
       \right)|T|.
     \end{align*}
     Thus, if we choose $\tau>0$ such that $c_{2}\tau e^{m}+2(c_{1}\tau e^{m})^{\frac{1}{2}}
     \leq\delta$, then (iii) also holds on the set $S_{2}$.
     
     \medskip
     
     \emph{Proof of (iv):} Let us first consider the case when $z\in S_{1}$. Then, again,
     $s_{m,\tau}(z)=\zeta(z)\cdot\sigma(z)$ and if $T$ is in the span of 
     $\{W^{\alpha}(z)\}_{\alpha=1}^{n-1-i}$, we obtain
     \begin{align*}
       H_{s_{m,\tau}}(T,T)(z)=\left[\sigma\cdot H_{\zeta}(T,T)
       +2\Re\left(\langle\partial\zeta,T\rangle\cdot
       \overline{\langle\partial\sigma,T\rangle}\right)
       +\zeta\cdot H_{\sigma}(T,T)
       \right](z).
     \end{align*}
     Let $c_{3}>0$ be a constant such that $H_{\zeta}(\xi,\xi)(z)\geq 
     -c_{3}|\xi|^{2}$ for all  $z\in b\Omega\cap V$, $\xi\in\mathbb{C}^{n}$. Then it follows, using 
     \eqref{E:McNealapplied} again, that
     \begin{align*}
       H_{s_{m,\tau}}(T,T)(z)\geq
       -\left(c_{3}\sigma(z)+2c_{2}\left(c_{1}\sigma(z) \right)^{\frac{1}{2}}\right) |T|^{2}
       +\zeta(z)\cdot H_{\sigma}(T,T)(z).
     \end{align*}
     Note that  for $z$ and $T$ as above, $H_{\sigma}(T,T)(z)\geq 0$ when $\sigma(z)=0$. 
     Furthermore, the set 
     \begin{align*}
       \left\{(z,T)\;|\;z\in b\Omega\cap\overline{\supp\zeta},\;\sigma(z)=0,\;
       T\in\Span\{W^{1}(z),\dots,W^{n-1-i}(z)\}\right\}
     \end{align*}
      is a closed subset of the complex tangent bundle of $b\Omega$. 
      Thus there exists a neighborhood $U\subset V$
     of $\{z\in b\Omega\cap\overline{\supp\zeta}\;|\;\sigma(z)=0\}$ such that
     \begin{align*}
       H_{\sigma}(T,T)(z)\geq-\frac{\delta}{2}|T|^{2}
     \end{align*}
     holds for all $z\in b\Omega\cap U$, $T$ in the span of $\{W^{\alpha}(z)\}_{\alpha=1}^{n-1-i}$.
     Let $\nu_{1}$ be the maximum of $\sigma$ on the closure of $b\Omega\cap U$, and let
     $\nu_{2}$ be the minimum of $\sigma$ on 
     $\left(b\Omega\cap\overline{\supp\zeta}\right)\setminus U$.
     Now choose $\tau>0$ such that
     $\tau \leq\min\{\nu_{1},\frac{\nu_{2}}{2}\}$. Then $z\in S_{1}$ implies that $z\in b\Omega\cap U$ and 
     therefore  
     $\zeta(z)\cdot H_{\sigma}(T,T)(z)\geq-\frac{\delta}{2}|T|^{2}$ for all $T$ in the span of
     the $W^{\alpha}(z)$'s.
     If we also make sure that $\tau>0$ is such that $c_{3}\tau+2c_{2}\left(c_{1}\tau\right)^{\frac{1}{2}}
     \leq\frac{\delta}{2}$, then (iv) is true on $S_{1}$.
     
     Now suppose that $z\in S_{2}$ and $T$ in the span of 
     $\{W^{\alpha}(z)\}_{\alpha=1}^{n-1-i}$. We compute
     \begin{align*}
       H_{s_{m,\tau}}&(T,T)(z)=
       \Bigl[\sigma g_{m,\tau}(\sigma) H_{\zeta}(T,T)
       +2\Re\left(\langle\partial\zeta,T\rangle\cdot
       \overline{\langle\partial\sigma,T\rangle}
       \right)
       \left(
       g_{m,\tau}(\sigma)+\sigma g_{m,\tau}'(\sigma)
       \right)\Bigr.\\
       &\Bigl.+
       \zeta\left|\langle\partial\sigma,T\rangle\right|^{2}
       \left(2g_{m,\tau}'(\sigma)+\sigma g_{m,\tau}''(\sigma)\right)
       +\zeta H_{\sigma}(T,T)
       (g_{m,\tau}(\sigma)+\sigma g_{m,\tau}'(\sigma))
       \Bigr](z)\\
       &=\I+\II+\III+\IV.
     \end{align*}
     If we choose $\tau>0$ such that $\tau e^{m}c_{3}\leq\frac{\delta}{4}$, then it follows that
     $\I\geq-\frac{\delta}{4}|T|^{2}$. Estimating the term $\II$ we get
     \begin{align*}
       \II\overset{\eqref{E:propsg}}{\geq}-2c_{2}\left|\langle\partial\sigma(z),T\rangle
       \right||T|\left(1+\frac{4}{m}\right)
       \overset{\eqref{E:McNealapplied}}{\geq} -4c_{2}\left(c_{1}\sigma(z)\right)^{\frac{1}{2}}|T|^{2}
       \geq-\frac{\delta}{4}|T|^{2},
     \end{align*}
     if we choose $\tau>0$ such that $4c_{2}\left(c_{1}\tau e^{m}\right)^{\frac{1}{2}}\leq\frac{\delta}{4}$.
     To estimate term
     $\IV$, we only need to make sure that $\tau>0$ is so small that $z\in S_{2}$ implies that 
     $2\zeta(z)\cdot H_{\sigma}(T,T)(z)\geq
     -\frac{\delta}{4}|T^{2}$. This can be done similarly to the case when $z\in S_{1}$.
     
     Note that up to this point the size of the parameter $m$ played no role. That is, we obtain above 
     results for any choice of $m$ as long as $\tau>0$ is sufficiently small. The size of $m$ only matters
     for the estimates on term $\III$:  \eqref{E:propsg} and \eqref{E:McNealapplied} yield
     \begin{align*}
       \III&\geq -\left|\langle\partial\sigma(z),T\rangle\right|^{2}
       \left[2|g_{m,\tau}'(\sigma(z))|+\sigma(z)g_{m,\tau}''(\sigma(z))\right]\\
       &\geq -\left|\langle\partial\sigma(z),T\rangle\right|^{2}\frac{16}{m\sigma(z)}
       \geq-\frac{16 c_{1}}{m}|T|^{2}.
     \end{align*}
     We now choose $m>0$ such that  $\frac{16 c_{1}}{m}\leq\frac{\delta}{4}$, and then we choose 
     $\tau>0$ according to our previous computations .   
   \end{proof}
  
  \medskip
  
  \section{Proof of  \eqref{E:Main1}}\label{S:proof}
    We shall prove \eqref{E:Main1} by induction over the rank of the Levi form of $\rho$. To start the 
    induction we construct a smooth defining function $r_{0}$  of $\Omega$ which satisfies 
    \eqref{E:Main1} on $U_{0}\cap\Omega$ for some neighborhood $U_{0}$ of $\Sigma_{0}$. 
    
    Let $\{V_{j,0}\}_{j\in J_{0}}$, $\{V_{j,0}'\}_{j\in J_{0}}\ssubset\mathbb{C}^{n}$ be 
    finite, open covers of $\Sigma_{0}$ with $V_{j,0}'\ssubset V_{j,0}$ such that there exist smooth, 
    linearly independent $(1,0)$-vector fields $W_{j,0}^{\alpha}$, $\alpha\in\{1,\dots,n-1\}$, 
    defined on $V_{j,0}$, which
    are complex tangential to $b\Omega$ on $b\Omega\cap V_{j,0}$ and satisfy:
    \begin{enumerate}
      \item $H_{\rho}(W_{j,0}^{\alpha},W_{j,0}^{\alpha})=0$ on $\Sigma_{0}\cap V_{j,0}$ for all
         $j\in J_{0}$,
      \item  the span of the $W_{j,0}^{\alpha}(z)$'s contains the null space of the Levi form of
          $\rho$ at $z$ for all boundary points $z$ in $\cup_{j=i+1}^{n-2}\Sigma_{j}\cap V_{j,0}$.
    \end{enumerate}
     We shall write $V_{0}=\cup_{j\in J_{0}}V_{j,0}$ 
    and $V_{0}'=\cup_{j\in J_{0}}V_{j,0}'$. 
    
    Choose smooth, non-negative functions
    $\zeta_{j,0}$, $j\in J_{0}$, such that
    \begin{align*}
      \sum_{j\in J_{0}}\zeta_{j,0}=1\;\text{on}\;V_{0}',\;\sum_{j\in J_{0}}\zeta_{j,0}\leq 1\;\text{on}\;V_{0},
      \;\text{and}\;\overline{\supp\zeta_{j,0}}\subset V_{j,0}\;\text{for all}\;j\in J_{0}.
    \end{align*}
   Set $\sigma_{j,0}=\sum_{\alpha=1}^{n-1}H_{\rho}(W_{j,0}^{\alpha},W_{j,0}^{\alpha})$. For given 
   $\epsilon>0$, choose $C_{0}$ according to $\eqref{E:chooseC}$. Then choose $m_{j,0}$
   and $\tau_{j,0}$ as in Lemma \ref{L:cutoff} such that
   \begin{align*}
    s_{m_{j,0},\tau_{j,0}}(z)=
    \begin{cases}
         \zeta_{j,0}(z)\cdot\sigma_{j,0}(z)\cdot g_{m_{j,0},\tau_{j,0}}(\sigma_{j,0}(z)) &\text{if}\;\;z\in V_{j,0}\\
         0 &\text{if}\;\;z\in (V_{j,0})^{c}
    \end{cases}
   \end{align*}
  satisfies (i)-(iv) of Lemma \ref{L:cutoff} for $\delta_{0}=\frac{\epsilon}{C_{0}|J_{0}|}$.  
  Finally, set $s_{0}=\sum_{j\in J_{0}}s_{m_{j,0},\tau_{j,0}}$ and
  define the smooth defining function $r_{0}=\rho e^{-C_{0}s_{0}}$.
  
  \medskip
  
  By our choice of $r_{0}$ we have for all 
  $q\in\Omega\cap V_{0}'$ with $\pi(q)\in\Sigma_{0}\cap V_{0}'$  that
  \begin{align*}
    H_{r_{0}}(W,W)(q)\geq H_{r_{0}}(W,W)(\pi(q))-\epsilon \left|r_{0}(q)\right|\cdot |W|^{2}
    =-\epsilon \left|r_{0}(q)\right|\cdot |W|^{2}
   \end{align*}  
  for all $W\in\mathbb{C}T_{\pi(q)}b\Omega$.
  In fact, by continuity there exists a neighborhood $U_{0}\subset V_{0}'$ of $\Sigma_{0}$ such that
  \begin{align*}
    H_{r_{0}}(W,W)(q)\geq H_{r_{0}}(W,W)(\pi(q))-2\epsilon |r_{0}(q)||W|^{2}
  \end{align*}
  holds for all $q\in\Omega\cap U_{0}$ with $\pi(q)\in b\Omega\cap U_{0}$ and
  $W\in\mathbb{C}T_{\pi(q)}b\Omega$.
  
  Now let $\xi\in\mathbb{C}^{n}$. For each $q\in\Omega\cap U_{0}$ 
  with $\pi(q)\in b\Omega\cap U_{0}$ we shall
  write $\xi=W+M$, where $W\in\mathbb{C}T_{\pi(q)}b\Omega$ and $M$ in the span of $N(\pi(q))$.
  Note that then $|\xi|^{2}=|W|^{2}+|M|^{2}$.
  We get for the complex Hessian of $r_{0}$ at $q$:
  \begin{align*}
    H_{r_{0}}(\xi,\xi)(q)&=H_{r_{0}}(W,W)(q)+2\Re\left(
    H_{r_{0}}(W,M)(q)
    \right)
    +H_{r_{0}}(M,M)(q)\\
    &\geq H_{r_{0}}(W,W)(\pi(q))-2\epsilon\left|r_{0}(q)\right|\cdot |W|^{2}+
    2\Re\left(
    H_{r_{0}}(W,M)(q)
    \right)
    +H_{r_{0}}(M,M)(q).
  \end{align*}
  Note that Taylor's theorem yields
  \begin{align*}
    H_{r_{0}}(W,M)(q)&=H_{r_{0}}(W,M)(\pi(q))+\mathcal{O}(d_{b\Omega}(q))|W||M|\\
    &=
    e^{-C_{0}s_{0}(\pi(q))}\left(H_{\rho}(W,M)-C\overline{\langle\partial\rho,M\rangle}
    \langle\partial s_{0},W\rangle\right)(\pi(q))+\mathcal{O}(d_{b\Omega}(q))|W||M|.
  \end{align*}
   It follows by property (iii) of Lemma \ref{L:cutoff} that
  $|\langle\partial s_{0},W\rangle|\leq\frac{\epsilon}{C_{0}}|W|$ on $b\Omega$.
  After possibly shrinking $U_{0}$ we get
  \begin{align*}
    2\Re\left(H_{r_{0}}(W,M)(\pi(q))\right)\geq-4\epsilon|\partial\rho| |W| |M|
    +e^{-C_{0}s_{0}(\pi(q))}2\Re\left(H_{\rho}(W,M)\right)(\pi(q)).
  \end{align*}
  Putting the above estimates together, we have 
  \begin{align*}
    H_{r_{0}}(\xi,\xi)(q)\geq
    &-2\epsilon|r_{0}(q)||W|^{2}-4\epsilon|\partial\rho||W||M|+H_{r_{0}}(M,M)(q)\\
    &+e^{-C_{0}s_{0}(\pi(q))}
    \left[
      H_{\rho}(W,W)(\pi(q))+2\Re\left(H_{\rho}(W,M)(\pi(q))\right)
    \right].
  \end{align*}
 An application of the (sc)-(lc) inequality yields
   \begin{align*} 
   -\epsilon |W||M|\geq-\epsilon\left(|r_{0}(q)||\xi|^{2}
   +\frac{1}{|r_{0}(q)|}\left|\langle\partial r_{0}(\pi(q)),\xi\rangle\right|^{2}
   \right),
  \end{align*}
  where we used that $|\xi|^{2}=|W|^{2}+|M|^{2}$. Taylor's
  theorem also gives us that 
  \begin{align*}
    \left|\langle\partial r_{0}(\pi(q)),\xi\rangle\right|
    &=e^{-C_{0}s_{0}(\pi(q))}\cdot\left|
    \langle\partial\rho(\pi(q)),\xi\rangle
    \right|\\
    &\leq e^{-C_{0}s_{0}(\pi(q))}\cdot\left(
    \left|\langle\partial\rho(q),\xi\rangle\right|+
    \mathcal{O}\left(\rho(q)\right)|\xi|
    \right),
  \end{align*}
  where the constant in the last term is independent of $\epsilon$. After possibly shrinking $U_{0}$, we 
  obtain for all $q\in\Omega\cap U_{0}$
  \begin{align*}
     \left|\langle\partial r_{0}(\pi(q)),\xi\rangle\right|
     &\leq 2e^{-C_{0}s_{0}(q)}\cdot\left|\langle\partial\rho(q),\xi\rangle\right|
     +\mathcal{O}\left(r_{0}(q)\right)|\xi|\\
     &\leq 2\left|\langle\partial r_{0}(q),\xi\rangle\right|+2\left|\rho(q)\right|
     \cdot\left|\langle\partial e^{-C_{0}s_{0}(q)},\xi\rangle\right|
     +\mathcal{O}\left(r_{0}(q)\right)|\xi|,
  \end{align*}
  where, again, the constant in the last term is independent of $\epsilon$. Using that 
  $\left|\langle\partial s_{0},W\rangle\right|\leq\frac{\epsilon}{C_{0}}|W|$ on $b\Omega$, we get
  \begin{align*}
    \left|\langle\partial e^{-C_{0}s_{0}(q)},\xi\rangle\right|
    \leq 2\epsilon |W|+\mathcal{O}\left( \left|\langle\partial\rho(\pi(q)),\xi\rangle  \right| \right),
  \end{align*}
  which implies that
  $|\langle\partial r_{0}(\pi(q)),\xi\rangle|\lesssim|\langle\partial r_{0}(q),\xi\rangle|+
  |r_{0}(q)||\xi|$. Thus we have
  \begin{align*}
    -\epsilon|W||M|\gtrsim-\epsilon\left(|r_{0}(q)||\xi|^{2}
   +\frac{1}{|r_{0}(q)|}\left|\langle\partial r_{0}(q),\xi\rangle\right|^{2}
   \right).
  \end{align*}
  Since 
  \begin{align*}
    H_{\rho}(W,W)(\pi(q))+2\Re(H_{\rho}(W,M)(\pi(q))=H_{\rho}(\xi,\xi)(\pi(q))-H_{\rho}(M,M)(\pi(q)),
  \end{align*}
  and $H_{\rho}(\xi,\xi)(\pi(q))$ is non-negative, it follows that   
  \begin{align*}
    H_{r_{0}}(\xi,\xi)(q)\gtrsim -\epsilon\left(|r_{0}(q)||\xi|^{2}+\frac{1}{|r_{0}(q)|}
    \left|\langle\partial r_{0}(q),\xi\rangle\right|^{2}\right)
    +\mu_{0}H_{\rho}(\xi,\xi)(\pi(q))
  \end{align*}
  for some positive constant $\mu_{0}$.
  Since the constants in $\gtrsim$ do not depend on the choice of $\epsilon$, this
  proves \eqref{E:Main1} in an open neighborhood $U_{0}$ of $\Sigma_{0}$.
  
  \medskip
  
  Let $l\in\{0,\dots,n-3\}$ be fixed and suppose that there exist a smooth defining function
  $r_{l}$ of $b\Omega$ and an open neighborhood $U_{l}\subset\mathbb{C}^{n}$ of 
  $\cup_{i=0}^{l}\Sigma_{i}$ such that
  \begin{align}\label{E:ihypotheses1}
    H_{r_{l}}(\xi,\xi)(q)\geq -\epsilon
    \left(
    |r_{l}(q)||\xi|^{2}+\frac{1}{|r_{l}(q)|}\left|\langle\partial r_{l}(q),\xi\rangle\right|^{2}
    \right)+\mu_{l}H_{\rho}(\xi,\xi)(\pi(q))    
  \end{align}
  for all $q\in\Omega\cap U_{l}$ with $\pi(q)\in b\Omega\cap U_{l}$ and $\xi\in\mathbb{C}^{n}$. Here, 
  $\mu_{l}$ is some positive constant.
  Furthermore, we suppose that the function $\vartheta_{l}$ defined by $r_{l}=\rho e^{-\vartheta_{l}}$  
  satisfies 
  the following
  \begin{align}
    \left|
    \langle\partial\vartheta_{l}(z),T\rangle\right|&\leq\epsilon|T|\;\;\text{for all}\;\;z\in b\Omega, T\in
    \mathbb{C}T_{z}b\Omega\label{E:ihypothesis2}\\
    H_{\vartheta_{l}}(T,T)(z)&\geq-\epsilon|T|^{2}\;\;\text{for all}\;\; z\in\cup_{j=l+1}^{n-2}\Sigma_{j},
    T\in\mathbb{C}T_{z}b\Omega\;\;\text{with}\;\;H_{\rho}(T,T)(z)=0\label{E:ihypothesis3}.
  \end{align} 
  
  Let $k=l+1$. We shall now show that there exist a smooth defining function $r_{k}$ and
  a neighborhood $U_{k}$ of $\cup_{i=0}^{k}\Sigma_{i}$ such that for some positive constant
  $\mu_{k}$
  \begin{align}\label{E:claimistep}
    H_{r_{k}}(\xi,\xi)(q)\geq -\epsilon\left(
    |r_{k}(q)||\xi|^{2}+\frac{1}{|r_{k}(q)|}\left|
    \langle\partial r_{k}(q),\xi\rangle
    \right|^{2}
    \right)+\mu_{k}H_{\rho}(\xi,\xi)(\pi(q))
  \end{align}
  holds for all $q\in \Omega\cap U_{k}$ with $\pi(q)\in b\Omega\cap U_{k}$ and $\xi\in\mathbb{C}^{n}$.
  
  \medskip
  
  Let $\{V_{j,k}\}_{j\in J_{k}}$, $\{V_{j,k}'\}_{j\in J_{k}}$ be finite, open covers of $\Sigma_{k}\setminus
  U_{k-1}$ such that 
  \begin{enumerate}
  \item[(1)]   $V_{j,k}'\ssubset V_{j,k}$ and $\overline{V}_{j,k}
    \cap\left(\cup_{i=0}^{k-1}\Sigma_{i}\right)=\emptyset$ 
  for all $j\in J_{k}$, and 
  \item[(2)] there exist smooth, linearly 
  independent $(1,0)$-vector fields $W_{j,k}^{\alpha}$, $\alpha\in\{1,\dots,n-1-k\}$, 
  and $S_{j,k}^{\beta}$, $\beta\in\{1,\dots,k\}$, defined on $V_{j,k}$, which are 
  complex tangential to $b\Omega$ on $b\Omega\cap V_{j,k}$ and satisfy the following:
  \begin{enumerate}
    \item[(a)]  $H_{\rho}(W_{j,k}^{\alpha},W_{j,k}^{\alpha})=0$ on $\Sigma_{k}\cap V_{j,k},\;
      \alpha\in\{1,\dots,n-1-k\},\;j\in J_{k}$,
    \item[(b)] the span of $\{W_{j,k}^{1}(z),\dots,W_{j,k}^{k}(z)\}$ contains the null space of the Levi form
      of $\rho$ at all boundary points $z$ belonging to $\cup_{i=k+1}^{n-2}\Sigma_{i}\cap V_{j,k}$,  
    \item[(c)] $H_{\rho}(S_{j,k}^{\beta}, S_{j,k}^{\beta})> 0$ on $b\Omega\cap \overline{V_{j,k}},\;
     \beta\in\{1,\dots,k\},\;j\in J_{k}$,
   \item[(d)]  $H_{\rho}(S_{j,k}^{\beta},S_{j,k}^{\tilde{\beta}})=0$ for $\beta\neq\tilde{\beta}$ on
   $b\Omega\cap V_{j,k}$, $\beta,\tilde{\beta}\in\{1,\dots,k\},\;j\in J_{k}$.  
  \end{enumerate}
  \end{enumerate}
  Note that above vector fields $\{W_{j,k}^{\alpha}\}$ 
  always exist in some neighborhood of a given point in
  $\Sigma_{k}$. However, we might not be able to cover $\Sigma_{k}$ with finitely many such 
  neighborhoods, when the closure of $\Sigma_{k}$ contains boundary points at which the Levi 
  form is of lower rank. Moreover, if the latter is the case, then (c) above is also impossible. These are the
   reasons for proving \eqref{E:Main1} via induction over the rank of the Levi form of 
  $\rho$. 
  
   Suppose $S$ is in the span of $\{S_{j,k}^{\beta}(z)\}_{\beta=1}^{k}$ and
   $W$ is in the span of $\{W_{j,k}^{\alpha}(z)\}_{\alpha=1}^{n-1-k}$
   for some $z\in V_{j,k}$,    then there is some constant
  $\kappa_{k}>0$ such that $|S|^{2}+|W|^{2}\leq\kappa_{k}|S+W|^{2}$ for all $j\in J_{k}$. 
  We shall write $V_{k}=\cup_{j\in J_{k}}V_{j,k}$ and $V_{k}'=\cup_{j\in J_{k}}V_{j,k}'$.
  
  \medskip
  
  Let $\zeta_{j,k}$ be non-negative, smooth functions such that
  \begin{align*}
    \sum_{j\in J_{k}}\zeta_{j,k}=1\;\text{on}\;V_{k}',\;\sum_{j\in J_{k}}\zeta_{j,k}\leq 1\;
     \text{on}\;V_{k}\;\text{and}\;\overline{\supp\zeta_{j,k}}\subset V_{j,k}. 
  \end{align*}
  Set $\sigma_{j,k}=\sum_{\alpha=1}^{n-k-1}H_{\rho}(W_{j,k}^{\alpha},W_{j,k}^{\alpha})$. Recall that
  $\epsilon>0$ is given. Choose $C_{k}$  according to \eqref{E:chooseC} with 
  $\frac{\epsilon}{\kappa_{k}}$ in place of $\epsilon$ there.
  We now choose $m_{j,k},\;\tau_{j,k}>0$ such that
  \begin{align*}
    s_{m_{j,k},\tau_{j,k}}(z)=
    \begin{cases}
      \zeta_{j,k}(z)\cdot\sigma_{j,k}(z)\cdot g_{m_{j,k}\tau_{j,k}}(\sigma_{j,k}(z))&\text{if}\;
      z\in V_{j,k}\\
      0&\text{if}\;z\in (V_{j,k})^{c}
    \end{cases}
  \end{align*}
  satisfies (i)-(iv) of Lemma \ref{L:cutoff}  with
  $\delta_{k}=\frac{\epsilon}{C_{k}|J_{k}|\kappa_{k}}$. Set $s_{k}=\sum_{j\in J_{k}}s_{m_{j,k},\tau_{j,k}}$ 
  and define the smooth defining function
  $r_{k}=r_{k-1}e^{-Cs_{k}}$. We claim that this choice of $r_{k}$ satisfies
  \eqref{E:claimistep}.
  We shall first see that \eqref{E:claimistep} is true for all $q\in\Omega\cap U_{k-1}\cap V_{k}$ 
  with $\pi(q)\in b\Omega\cap U_{k-1}\cap V_{k}$.
  A straightforward computation yields
  \begin{align}\label{E:istepint1}
    H_{r_{k}}(\xi,\xi)(q)=
    e^{-C_{k}s_{k}(q)}
    \biggl[
      H_{r_{k-1}}(\xi,\xi)\biggr.&+r_{k-1}\left(
      C_{k}^{2}\left|\langle\partial s_{k},\xi\rangle\right|^{2}-C_{k}H_{s_{k}}(\xi,\xi)
      \right)\\
      \biggl.&-2C_{k}\Re\left(
     \overline{\langle\partial r_{k-1},\xi\rangle}\langle\partial s_{k},\xi\rangle
      \right)
    \biggr](q).\notag
  \end{align}
  By induction hypothesis \eqref{E:ihypotheses1} we have good control over the first term in 
  \eqref{E:istepint1}:
  \begin{align*}
    e^{-C_{k}s_{k}(q)}H_{r_{k-1}}(\xi,\xi)(q)\geq
    &-\epsilon\left(
    |r_{k}(q)||\xi|^{2}+\frac{1}{|r_{k}(q)|}\left|\langle
    e^{-C_{k}s_{k}(q)}\partial r_{k-1}(q),\xi\rangle
    \right|^{2}
    \right)\\
    &+e^{-C_{k}s_{k}(q)}\mu_{k-1}H_{\rho}(\xi,\xi)(\pi(q)).
  \end{align*}
  Note that
  \begin{align*}
    \left|
    \langle
    e^{-C_{k}s_{k}(q)}\partial r_{k-1}(q),\xi\rangle
    \right|^{2}
    \leq 
    2\left|
    \langle\partial r_{k}(q),\xi\rangle
    \right|^{2}
    +r_{k}^{2}(q)C_{k}^{2}\left|
    \langle\partial s_{k}(q),\xi\rangle
    \right|^{2}.
  \end{align*}
  Moreover, part (iii) of Lemma \ref{L:cutoff} implies that
  \begin{align}\label{E:istepint2}
    C_{k}^{2}\left|
    \langle\partial s_{k}(q),\xi\rangle
    \right|^{2}
    \leq 
    2\epsilon|\xi|^{2}+\mathcal{O}\left(\left|
    \langle\partial r_{k}(\pi(q)),\xi\rangle
    \right|^{2}\right)
    \leq 3\epsilon|\xi|^{2}+\mathcal{O}\left(\left|
    \langle\partial r_{k}(q),\xi\rangle
    \right|^{2}\right)
  \end{align}  
  after possibly shrinking $U_{k-1}$ (in normal direction only). Thus
  we have
  \begin{align*}
     e^{-C_{k}s_{k}(q)}H_{r_{k-1}}(\xi,\xi)(q)\gtrsim
      -\epsilon\left(
    |r_{k}(q)||\xi|^{2}+\frac{1}{|r_{k}(q)|}\left|\langle
    \partial r_{k}(q),\xi\rangle
    \right|^{2}
    \right)+\mu H_{\rho}(\xi,\xi)(\pi(q)).
 \end{align*}
 For some positive constant $\mu\leq\mu_{k-1}$. Thus the first term on the right hand side of  
 \eqref{E:istepint1} is taken care of.
 Now suppose $q\in\Omega\cap U_{k-1}\cap V_{k}$ is such that $\pi(q)\in b\Omega\cap V_{j,k}$ for 
 some $j\in J_{k}$. To be able to deal with the term $H_{s_{k}}(\xi,\xi)(q)$ in \eqref{E:istepint1}, we shall 
 write $\xi=S+W+M$, where
 \begin{align*}
   S\in\Span\left(\{S_{j,k}^{\beta}(\pi(q))\}_{\beta=1}^{k} \right),\;\;
   W\in\Span\left(\{W_{j,k}^{\alpha}(\pi(q))\}_{\alpha=1}^{n-1-k}\right),\;\;\text{and}\;\;
   M\in\Span\left(N(\pi(q))\right).
 \end{align*}
 Then the (sc)-(lc) inequality gives
 \begin{align*}
   C_{k}H_{s_{k}}(\xi,\xi)(q)&\geq C_{k}H_{s_{k}}(W,W)(q)-\frac{\epsilon}{\kappa_{k}}|W|^{2}+
   \mathcal{O}\left(|S|^{2}+|M|^{2}\right)\\
   &\geq -2\frac{\epsilon}{\kappa_{k}}|W|^{2}+\mathcal{O}\left(|S|^{2}+|M|^{2}\right),
 \end{align*}
 where the last step holds since $s_{k}$ satisfies part (iv) of Lemma \ref{L:cutoff}.
 The last inequality together with \eqref{E:istepint2} lets us estimate the second term
 in \eqref{E:istepint1} as follows
 \begin{align*}
      e^{-C_{k}s_{k}(q)}r_{k-1}(q)&\left(
      C_{k}^{2}\left|\langle\partial s_{k},\xi\rangle\right|^{2}-C_{k}H_{s_{k}}(\xi,\xi)
      \right)(q)\\
      &\gtrsim
      -\epsilon
      \left(
    |r_{k}||\xi|^{2}+\frac{1}{|r_{k}|}\left|\langle
    \partial r_{k},\xi\rangle
    \right|^{2}
    \right)(q)+\mathcal{O}(r_{k}(q))|S|^{2}.
  \end{align*}
  For the third term in \eqref{E:istepint1} we use \eqref{E:istepint2} again and obtain
  \begin{align*}
    -2C_{k}e^{-C_{k}s_{k}(q)}\Re\left(
    \overline{\langle\partial r_{k-1},\xi\rangle}\langle \partial s_{k},\xi\rangle
    \right)(q)
    \gtrsim
    -\epsilon\left(|r_{k}(q)||\xi|^{2}+\frac{1}{|r_{k}(q)|}\left|\langle\partial r_{k-1}(q),\xi\rangle
    \right|^{2}\right).
  \end{align*}
  Collecting all these estimates, using $\frac{1}{\kappa_{k}}|W|^{2}\leq|\xi|^{2}$, we now have
  for $q\in\Omega\cap U_{k-1}\cap V_{k}$ with $\pi(q)\in b\Omega\cap  U_{k-1}\cap V_{k}$ and for some  
  $\mu>0$
  \begin{align*}
    H_{r_{k}}(\xi,\xi)(q)\gtrsim
    &-\epsilon
    \left(|r_{k}(q)||\xi|^{2}+\frac{1}{|r_{k}(q)|}\left|\langle\partial r_{k-1}(q),\xi\rangle
    \right|^{2}\right)\\
    &\hspace{4cm}+\mu H_{\rho}(\xi,\xi)(\pi(q))+\mathcal{O}(r_{k}(q))|S|^{2}\\
    \gtrsim&-\epsilon
    \left(|r_{k}(q)||\xi|^{2}+\frac{1}{|r_{k}(q)|}\left|\langle\partial r_{k-1}(q),\xi\rangle
    \right|^{2}\right)
    +\frac{\mu}{2}H_{\rho}(\xi,\xi)(\pi(q)).
  \end{align*}
  Here, the last estimate holds, after possibly shrinking $U_{k-1}\cap V_{k}$ (in normal direction only), 
  since $\rho$ is plurisubharmonic on $b\Omega$.
  
  \medskip
    
  We still need to show that \eqref{E:claimistep} is true in some neighborhood of 
  $\Sigma_{k}\setminus U_{k-1}$. Let $U\subset V_{k}$ be a neighborhood of
  $\Sigma_{k}\setminus U_{k-1}$, $q\in \Omega\cap U$ with $\pi(q)\in b\Omega\cap V_{j,k}$ for
  some $j\in J_{k}$
  and $\xi\in\mathbb{C}^{n}$. Writing $\vartheta_{k}=\vartheta_{k-1}+C_{k}s_{k}$, we get
  \begin{align*}
    H_{r_{k}}(\xi,\xi)(q)
    =e^{-\vartheta_{k}(q)}
    \Bigl[
    H_{\rho}(\xi,\xi)-&2\Re\left(
    \overline{\langle\partial\rho,\xi\rangle}
    \langle\partial\vartheta_{k},\xi\rangle
    \right)\Bigl.\\
    &\Bigr.+\rho\left(
    \left|\langle\partial\vartheta_{k},\xi\rangle \right|^{2}
    -H_{\vartheta_{k-1}}(\xi,\xi)-C_{k}H_{s_{k}}(\xi,\xi)
    \right)
    \Bigr](q)\\
    &=\I+\II+\III+\IV+\V.
  \end{align*}
  We write again $\xi=S+W+M$.
  By construction of $s_{k}$ and by the induction hypotheses \eqref{E:ihypothesis2} and 
  \eqref{E:ihypothesis3} on 
  $\vartheta_{k-1}$, we can do estimates similar to the ones below \eqref{E:istepint1} to obtain
  \begin{align*}
    \II+\III+\IV\gtrsim
    -\epsilon\left(
    |r_{k}(q)||\xi|^{2}+\frac{1}{|r_{k}(q)|}\left|
    \langle\partial r_{k}(q),\xi\rangle
    \right|^{2}
    \right)+\mathcal{O}(r_{k}(q))|S|^{2}\;\;\text{for}\;\;q\in U.
  \end{align*}
  So we are left with the terms $\I$ and $\V$.   
  Let us first consider the term $\I$. By Taylor's Theorem, we have
  \begin{align*}
    e^{-\vartheta_{k}(q)}H_{\rho}(\xi,\xi)(q)
    =
    e^{-\vartheta_{k}(q)}&\Bigl(H_{\rho}(\xi,\xi)(\pi(q))
    -d_{b\Omega}(q)\Re\bigl[ (N
    H_{\rho})(\xi,\xi)(\pi(q))\bigr]\Bigr)\\
    &+\mathcal{O}\left(r_{k}^{2}(q)\right)|\xi|^{2}.
  \end{align*}
  By the (sc)-(lc) inequality we have
  \begin{align*}
    \Re \left[(NH_{\rho})(\xi,\xi)(\pi(q))\right]
    \leq &\Re\left[(NH_{\rho})(W,W)(\pi(q))\right]\\
    &+\frac{\epsilon}{c_{1}\kappa_{k}}|W|^{2}+
    \mathcal{O}\left(\frac{c_{1}\kappa_{k}}{\epsilon}\right)(|S|^{2}+|M|^{2}),
  \end{align*}
  where $c_{1}>0$ is such that $d_{b\Omega}(q)\leq c_{1}|\rho(q)|$.
  Therefore we obtain for some $\mu>0$
  \begin{align*}
    e^{-\vartheta_{k}(q)}H_{\rho}(\xi,\xi)(q)
    \geq
    &-d_{b\Omega}(q)e^{-\vartheta_{k}(q)}\Re\left[(NH_{\rho})(W,W)(\pi(q))\right]
    +r_{k}(q)\frac{\epsilon}{\kappa_{k}}|W|^{2}\\
    &+\mu H_{\rho}(\xi,\xi)(\pi(q))+\mathcal{O}\left(r_{k}(q)\right)(|S|^{2}+|M|^{2})
    +\mathcal{O}\left(r_{k}^{2}(q)\right)|\xi|^{2}.
  \end{align*}
  To estimate term $\V$ we use (sc)-(lc) inequality again:
  \begin{align*}
    -r_{k}(q)C_{k}H_{s_{k}}(\xi,\xi)(q)
    \geq -r_{k}(q)\left(C_{k}H_{s_{k}}(W,W)(q)-\frac{\epsilon}{\kappa_{k}}|W|^{2}\right)+
    \mathcal{O}\left(r_{k}(q)\right)(|S|^{2}+|M|^{2}).
  \end{align*}
  After possibly shrinking $U$, we get for some $\mu>0$
  \begin{align*}
    \I+\V\geq
    &
    -d_{b\Omega}(q)e^{-\vartheta_{k}(q)}\Re\left[(NH_{\rho})(W,W)(\pi(q))\right]
    +r_{k}(q)\left(-C_{k}H_{s_{k}}(W,W)(q)+\frac{2\epsilon}{\kappa_{k}}|W|^{2}
    \right)\\
    &+\mu H_{\rho}(\xi,\xi)(\pi(q))+\mathcal{O}\left(r_{k}(q)\right)|M|^{2}
    +\mathcal{O}\left(r_{k}^{2}(q)\right)|\xi|^{2}.
  \end{align*}
  By our choice of $s_{k}$ and $C_{k}$, it follows that for all $q\in\Omega\cap U$ with
  $\pi(q)\in b\Omega\cap U$ we have
  \begin{align*}
    -d_{b\Omega}e^{-\vartheta_{k}(q)}\Re\left[ (NH_{\rho})(W,W)(\pi(q))\right]
    -r_{k}(q)C_{k}H_{s_{k}}(W,W)(q)\geq\frac{\epsilon}{\kappa_{k}}r_{k}(q)|W|^{2}.
  \end{align*}
  Putting our estimates for the terms $\I$--$\V$ together and letting
   $U_{k}$ be the union of $U_{k-1}$ and 
  $U$, we obtain: for all $q\in\Omega\cap U_{k}$ with $\pi(q)\in b\Omega\cap U_{k}$, the 
  function $r_{k}$ satisfies
  \begin{align*}
    H_{r_{k}}(\xi,\xi)(q)\gtrsim -\epsilon\left(
    |r_{k}(q)||\xi|^{2}+\frac{1}{|r_{k}(q)|}\left| 
    \langle\partial r_{k}(q),\xi\rangle
    \right|^{2}
    \right)+\mu_{k}H_{\rho}(\xi,\xi)(\pi(q))
  \end{align*}
  for some $\mu_{k}>0$.
  Since the constants in $\gtrsim$ do
   not depend on $\epsilon$ or on any other parameters which come 
  up in the construction of $r_{k}$, \eqref{E:claimistep} follows. Moreover, by construction, $\vartheta_{k}$  
  satisfies \eqref{E:ihypothesis2} and \eqref{E:ihypothesis3}. 
   
   Note that $\Sigma_{n-1}\setminus U_{n-2}$ is 
   a closed subset of the set of strictly pseudoconvex boundary points. Thus for
   any smooth defining function $r$ there is some
   neighborhood $U$ of $\Sigma_{n-1}\setminus U_{n-2}$ such that
   \begin{align*}
     H_{r}(\xi,\xi)\gtrsim |\xi|^{2}
     +\mathcal{O}(|\langle\partial r(q),\xi\rangle|^{2})\;\;\text{for all}\;\;\xi\in\mathbb{C}^{n}
   \end{align*}
   holds on $U\cap\Omega$.
  This concludes the proof of \eqref{E:Main1}.
  
  \medskip
  
  The proof of \eqref{E:Main2} is essentially the same as the one of \eqref{E:Main1} except that a few 
  signs change. That is, the basic estimate \eqref{E:BasicTaylorH} for $q\in\overline{\Omega}^{c}\cap U$
  becomes
  \begin{align*}
    H_{\rho}(W,W)(q)=2d_{b\Omega}(q)NH_{\rho}(W,W)(\pi(q))+\mathcal{O}\left(d_{b\Omega}^{2}(q)
    \right)|W|^{2}
  \end{align*}
   for any vector $W\in\mathbb{C}^{n}$ which is a weak complex tangential direction at $\pi(q)$.
   So an obstruction for \eqref{E:Main2} to hold at $q\in\overline{\Omega}^{c}\cap U$ 
   occurs when $NH_{\rho}(W,W)$ is negative at $\pi(q)$ -- 
   note that this happens exactly when we have no problem with \eqref{E:Main1}. 
   Since the obstruction terms
   to \eqref{E:Main1} and \eqref{E:Main2} only differ by a sign, one would expect that the necessary 
   modifications of 
   $\rho$ also just differ by a sign. In fact, let $\vartheta_{n-2}$ be as in the proof of \eqref{E:Main1} --  
   that is, $r_{1}=\rho e^{-\vartheta_{n-2}}$ satisfies \eqref{E:Main1} for a given $\epsilon>0$. Then
   $r_{2}=\rho e^{\vartheta_{n-2}}$ satisfies \eqref{E:Main2} for the same $\epsilon$.

   \section{Proof of Corollary \ref{C:DF}}\label{S:DF}
     We shall now prove Corollary \ref{C:DF}. We begin with part (i) by 
     showing  first that for 
     any $\eta\in(0,1)$ there exist a $\delta>0$, a smooth defining function $r$ of $\Omega$ and a 
     neighborhood $U$ of $b\Omega$  such that
     $h=-(-re^{-\delta|z|^{2}})^{\eta}$ is strictly plurisubharmonic on $\Omega\cap U$.
     
     Let $\eta\in(0,1)$ be fixed, and $r$ be a smooth defining function of $\Omega$. For notational ease 
     we write $\phi(z)=\delta|z|^{2}$ for $\delta>0$. Here, $r$ and $\delta$ are fixed and to be chosen later.
     Let us compute the complex Hessian of $h$ on $\Omega\cap U$:
     \begin{align*}
       H_{h}(\xi,\xi)=&\eta(-r)^{\eta-2}e^{-\phi\eta}
       \Bigl[
       (1-\eta)\Bigr.\left|\langle\partial r,\xi\rangle\right|^{2}-rH_{r}(\xi,\xi)\\
       &+2r\eta \Re\left(\langle\partial r,\xi\rangle\langle\overline{\partial\phi,\xi}\rangle\right)
       \Bigl.
       -r^{2}\eta\left|\langle\partial\phi,\xi\rangle\right|^{2}
      +r^{2}H_{\phi}(\xi,\xi)\Bigr].
     \end{align*}
     An application of the (sc)-(lc) inequality gives
     \begin{align*}
       2r\eta \Re\left(\langle\partial r,\xi\rangle\langle\overline{\partial\phi,\xi}\rangle\right)
       \geq -\frac{1-\eta}{2}\left|\langle\partial r,\xi\rangle\right|^{2}
       -\frac{2r^{2}\eta^{2}}{1-\eta}\left|\langle\partial\phi,\xi\rangle\right|^{2}.
     \end{align*}
     Therefore, we obtain for the complex Hessian of $h$ on $\Omega$ the following:
     \begin{align*}
       H_{h}(\xi,\xi)
       \geq
       \eta(-r)^{\eta-2}e^{-\phi\eta}
       \Biggl[\frac{1-\eta}{2}|\langle\partial r,\xi\rangle|^{2}
       \Biggr.&-rH_{r}(\xi,\xi)\\
       &+r^{2}
       \left\{
       -\frac{\eta(1+\eta)}{1-\eta}|\langle\partial\phi,\xi\rangle|^{2}+H_{\phi}(\xi,\xi)
       \right\}
       \Biggl.\Biggr].
     \end{align*}
     Set  $\delta=\frac{1-\eta}{2\eta(1+\eta)D}$, where $D=\max_{z\in\overline{\Omega}}|z|^{2}$. Then we 
     get
     \begin{align*}
       H_{\phi}(\xi,\xi)-\frac{\eta(1+\eta)}{1-\eta}\left|\langle\partial\phi,\xi\rangle\right|^{2}
       =
       \delta\left(H_{|z|^{2}}(\xi,\xi)-\frac{\eta(1+\eta)}{1-\eta}\delta\left|\langle\overline{z},
       \xi\rangle\right|^{2}
       \right)
       \geq
       \frac{\delta}{2}|\xi|^{2}.
      \end{align*}
       This implies that
     \begin{align}\label{E:generalDFest}
       H_{h}(\xi,\xi)\geq \eta(-r)^{\eta-2}e^{-\phi\eta}
       \left[
       \frac{1-\eta}{2}|\langle \partial r,\xi\rangle|^{2}
       -rH_{r}(\xi,\xi)+\frac{\delta}{2}r^{2}|\xi|^{2}
       \right]
     \end{align}
     holds on $\Omega$. 
     
     Set $\epsilon=\min\{\frac{1-\eta}{4},\frac{1-\eta}{8\eta(1+\eta)D}\}$.
     By \eqref{E:Main1} 
     there exist a neighborhood $U$ of $b\Omega$ and a smooth defining function $r_{1}$ 
     of $\Omega$ such that
     \begin{align*}
     H_{r_{1}}(\xi,\xi)(q)
     \geq
     -\epsilon\left(|r_{1}(q)||\xi|^{2}+\frac{1}{|r_{1}(q)|}|\langle\partial r_{1}(q),\xi\rangle|^{2}\right)
     \end{align*}
     holds for all $q\in\Omega\cap U$, $\xi\in\mathbb{C}^{n}$. 
     Setting $r=r_{1}$ and using \eqref{E:generalDFest}, we obtain
     \begin{align*}
       H_{h}(\xi,\xi)(q)\geq
       \eta(-h(q))\cdot\epsilon|\xi|^{2}\;\;\text{for}\;\;q\in\Omega\cap U,\;\xi\in\mathbb{C}^{n}.
     \end{align*}
     It follows by standard arguments that there exists a defining function $\widetilde{r}_{1}$ such that 
     $-(-\widetilde{r}_{1})^{\eta}$ is strictly 
     plurisubharmonic on $\Omega$; for details see pg.\ 133 in \cite{DF1}. 
      This proves part (i) of Corollary \ref{C:DF}.
     
     \medskip
     A proof similar to the one of part (i), using \eqref{E:Main2}, 
     shows that  for each $\eta>1$
     there exists a smooth
     defining function $\tilde{r}_{2}$, a neighborhood $U$ of $b\Omega$ and $\delta>0$ such that
     $(r_{2}e^{\delta|z|^{2}})^{\eta}$ is strictly plurisubharmonic on $\overline{\Omega}^{c}\cap U$.

\end{document}